\newcommand{\R}{{\mathbb R}}
\DeclareMathOperator{\Cov}{Cov}
\DeclareMathOperator{\Ex}{E}
\DeclareMathOperator{\Px}{P}
\newcommand{\ignore}[1]{}
\newcommand{\N}{\mathcal{N}}
\newcommand{\RR}{\mathbb R}
\newcommand{\NN}{\mathbb N}
\newcommand{\X}{\boldsymbol{X}}
\newcommand{\Y}{\boldsymbol{Y}}
\newcommand{\U}{\boldsymbol{U}}
\newcommand{\XI}{\boldsymbol{\xi}}
\newcommand{\bbr}{\mathbb{R}}
\newcommand{\bbn}{\mathbb{N}}
\newcommand{\abs}[1]{\left| #1 \right|}
\newtheorem{lemma}{Lemma}[section]
\newtheorem{theorem}[lemma]{Theorem}
\newtheorem{proposition}[lemma]{Proposition}
\theoremstyle{definition}
\newtheorem{definition}[lemma]{Definition}
\newtheorem{example}[lemma]{Example}
\newtheorem{remark}[lemma]{Remark}
\begin{document}
\bibliographystyle{plainnat}
\title[Ordinal Patterns for LRD Time Series ]
{Ordinal Patterns in Long-Range Dependent Time Series}
\author[A. Betken]{Annika Betken}
\author[J. Buchsteiner]{Jannis Buchsteiner}
\author[H. Dehling]{Herold Dehling}
\author[I. M\"unker]{Ines M\"unker}
\author[A. Schnurr]{Alexander Schnurr}
\author[J. Woerner]{Jeannette H.C. Woerner}
\today

\address{Fakult\"at f\"ur Mathematik, Ruhr-Universit\"at Bo\-chum,  44780 Bochum, Germany}
\email{annika.betken@rub.de}
\address{Fakult\"at f\"ur Mathematik, Ruhr-Universit\"at Bo\-chum,  44780 Bochum, Germany}
\email{jannis.buchsteiner@rub.de}

\address{Fakult\"at f\"ur Mathematik, Ruhr-Universit\"at Bo\-chum,  44780 Bochum, Germany}
\email{herold.dehling@rub.de}
\address{Department of Mathematics, University of Siegen, 57068 Siegen, Germany}
\email{muenker@mathematik.uni-siegen.de}
\address{Department of Mathematics, University of Siegen, 57068 Siegen, Germany}
\email{schnurr@mathematik.uni-siegen.de}
\address{Department of Mathematics, TU Dortmund, 44221 Dortmund}
\email{jwoerner@mathematik.uni-dortmund.de}

\keywords{Hurst index, limit theorems, long-range dependence, ordinal patterns}

\thanks{This research was supported in part by the German Research Foundation (DFG) through  Collaborative Research Center SFB 823  {\em Statistical Modelling of Nonlinear Dynamic Processes}, Research Training Group RTG 2131 {\em High-dimensional Phenomena in Probability - Fluctuations and Discontinuity} and the project \emph{ Ordinal-Pattern-Dependence: Grenzwerts\"atze und Strukturbr\"uche im langzeitabh\"angigen Fall mit Anwendungen in Hydrologie, Medizin und Finanzmathematik} (SCHN 1231/3-1).}
\begin{abstract}
We analyze the ordinal structure of long-range dependent time series. 
To this end, we use so called ordinal patterns which describe the relative position of consecutive data points.
We provide two estimators for the probabilities of ordinal patterns and prove limit theorems in different settings, namely stationarity and (less restrictive) stationary increments. In the second setting, we encounter a Rosenblatt distribution in the limit. We prove more general limit theorems for functions with Hermite rank 1 and 2. We derive the limit distribution for an estimation of the Hurst parameter $H$ if it is higher than 3/4. Thus, our theorems complement results for lower values of $H$ which can be found in the literature. Finally, we provide some simulations that illustrate our theoretical results. 
\end{abstract}
\maketitle

\section{Introduction}\noindent
Originally, ordinal patterns have been introduced to analyze long and noisy time series. They have proved to be useful in various contexts such as sunspot numbers (\cite{bandt:shiha:2007}), EEG data (\cite{keller:maksymenko:stolz:2015}), speech signals (\cite{bandt:2005}) and chaotic maps which appear in the theory of dynamical systems (\cite{bandt:pompe:2002}).
Further applications include the approximation of the Kolmogorov-Sinai entropy (\cite{sinn:2012}). Recently, ordinal patterns have been used to detect and to model dependence structures between time series; see \cite{schnurr:2014}. Limit theorems for the parameters under consideration have been proved in the short-range dependent setting in \cite{schnurr:dehling:2017}.\par
In the present paper we will investigate ordinal patterns in the long-range dependent setting. To the best of our knowledge \cite{sinn:keller:2011} is the only article which explicitly deals with the interplay between ordinal patterns and the Hurst parameter $H$. The authors estimate this parameter of a fractional Brownian motion restricting their considerations to $H<\frac{3}{4}$. 
An overview and a comparison of various other techniques for estimating the Hurst parameter is given in \cite{taqqu:et_al:1995} and \cite{rea:et_al:2009}. None of the therein considered methods
 requires a restriction on the range of admissible values for $H$.
Nonetheless, graphical methods that are used to estimate the Hurst parameter such as  the aggregated variance method or  the R/S method (\cite{mandelbrot:wallis:1969}, \cite{mandelbrot:1975} and \cite{mandelbrot:taqqu:1979}) are known to be biased. 
Estimators operating in the frequency domain of time series, such as the Whittle estimator, 
which are usually based on an estimation of the spectral density by the periodogram,
often make parametric assumptions on the spectral density of the data-generating process.
Semiparametric alternatives such as the GPH estimator (\cite{geweke:porter-hudak:1983})
and
  the local Whittle estimator (\cite{kuensch:1987}, \cite{robinson:1995})   require the choice of  
a bandwidth parameter denoting the number   of  Fourier frequencies incorporated in the estimation of the spectral density by the periodogram. 
The choice of this tuning parameter is crucial to the performance of semiparametric estimates, but difficult to select in practice.
For the local Whittle estimator
the selection of the  
bandwidth has been addressed by several authors; see for example  \cite{henry:2001}, \cite{delgado:1996} and \cite{henry:robinson:1996}. A different approach to estimate the Hurst parameter is to apply variational methods and techniques from stochastic analysis as for example derived in \cite{coeurjolly:2001} and \cite{istas:1997}. 
For an ordinal-pattern based estimation of the Hurst parameter, the asymptotic distribution of the estimator is derived on the basis of limit theorems for short-range dependent time serie in \cite{sinn:keller:2011}. 
Complementing the results of \cite{sinn:keller:2011}, we derive the limit distribution for the estimator if $H>\frac{3}{4}$. 

In \cite{fischer:schumann:schnurr:2017} the authors used ordinal patterns in the context of hydrological data. It is a well known fact that hydrological data is often long-range dependent. In this case, the limit theorems presented in \cite{schnurr:dehling:2017} are no longer valid. In the present paper we close this gap and provide limit theorems in the long-range dependent setting.

\par
For $h\in\mathbb{N}$ let $S_{h}$ denote the set of permutations of $\{0, \ldots, h\}$, which we write as $(h+1)$-tuples containing each of the numbers $0, \ldots, h$ exactly one time.
By the ordinal pattern of order $h$  we refer to the permutation
\begin{align*}
\Pi(x_0, \ldots, x_h)=(\pi_0,\ldots, \pi_h)\in S_h
\end{align*}
 which satisfies
\begin{align*}
x_{\pi_0}\geq \ldots\geq x_{\pi_h}.
\end{align*}
\noindent
 Given a time series $(\xi_j)_{j\geq 0}$, we consider the relative frequency
\[
  \hat{q}_n(\pi):=\frac{1}{n}\sum\limits_{i=0}^{n-1}1_{\left\{\Pi(\xi_i, \xi_{i+1}, \ldots, \xi_{i+h})=\pi\right\}}
\]
of an ordinal pattern $\pi\in S_h$
as  a natural estimator for the probability 
\begin{align*}
p(\pi):=\Px(\Pi(\xi_0,\ldots,\xi_{h})=\pi).
\end{align*}

\cite{sinn:keller:2011} show that Rao-Blackwellization leads to an estimator   $\hat{p}_n(\pi)$ with lower risk and therefore better statistical properties.

In this article, both estimators are studied. 
Confirming the results of 
 \cite{sinn:keller:2011}, we show   that  $\hat{q}_n(\pi)$ and $\hat{p}_n(\pi)$ are consistent estimators; see Proposition \ref{prop:consistency}.  
 We consider separately the case of a stationary time series and the case of a time series with stationary increments. While the asymptotic distribution of $\hat{q}_n(\pi)$ can be derived from a limit theorem for functions with  Hermite rank $1$, the limit behaviour of $\hat{p}_n(\pi)$ is derived from  corresponding results for functions with  Hermite rank $2$. Along the way we explicitly calculate the asymptotic distribution of partial sums of the form $\sum_{i=1}^n f(X_i,...,X_{i+p-1})$ where $f$ has Hermite rank $1$ or Hermite rank $2$ and $(X_i)_{i\geq 1}$ is a stationary long-range dependent Gaussian process.

The paper is organized as follows: in the next section we introduce the mathematical framework. In Section 3 we present the main results, namely the asymptotic properties of two estimators of ordinal pattern probabilities. In Section 4, on the basis of these considerations, the asymptotic distribution of an estimator for the Hurst parameter based on ordinal patterns is derived. The detailed proofs of more general limit theorems for functions with Hermite rank $1$ and $2$, that constitute the theoretical background of the results in Section 3 and 4, are given in Section 5. In the final section a simulation study is presented. 
\section{Mathematical Framework}\noindent
Let $(X_j)_{j\geq0}$ be a stationary standard Gaussian process with  autocovariance function
\begin{equation*}
r(k):=\Cov(X_0,X_{k})=L(k)k^{-D},~~k\geq 1,
\end{equation*}
where $L$ is a function, slowly varying at infinity (see \cite{bingham:1987}, p.6), and $0<D<1$. Such a process is called long-range dependent. For $p\in\NN$ we consider the $\RR^{p}$-valued process $(\X_j)_{j\geq 0}$ given by
\begin{equation*}
\X_j:=(\X_j^{(1)}, \X_j^{(2)}, \ldots, \X_j^{(p)}) \text{ with } \X_j^{(i)}:=X_{j+i-1},
\end{equation*}
that is, we consider overlapping finite sequences of the original process. 
For $1\leq l,m\leq p$, $p\in \mathbb{N}$, the corresponding cross-covariance function satisfies
\begin{equation*}
r^{(l,m)}(k)=\Ex\X_0^{(l)}\X_{k}^{(m)}=L(|k+m-l|)|k+m-l|^{-D}, ~~k\geq 1,
\end{equation*}
and, since $L$ is a slowly varying function, we thus obtain
\begin{equation*}
\lim_{k\rightarrow\infty}\frac{k^Dr^{(l,m)}(k)}{L(k)}=1
\end{equation*}
for all $l,m\in\mathbb{N}$. Consequently, $(\X_j)_{j\geq0}$ is multivariate long-range dependent in the sense of \cite{arcones:1994}, Section 3, if $0<D<1$. If  $D>1$, we speak of short-range dependence.
 
We recall the concept of Hermite expansion. Let $H_k$ denote the Hermite polynomial of order $k$ given by
\begin{equation*}
H_k(x)=(-1)^ke^{x^2/2}\frac{d^k}{dx^k}e^{-x^2/2},\hspace{8mm}x\in\RR,
\end{equation*}
and define the multivariate Hermite polynomial $H_{l_1,\ldots,l_p}$ by 
\begin{equation}\label{eq:def_mvH}
H_{l_1,\ldots,l_p}(x)=\prod_{i=1}^pH_{l_i}(x_i),\hspace{8mm}x\in\RR^p.
\end{equation}
The collection $(H_{l_1,\ldots,l_p})_{l_1,\ldots,l_p\geq0}$ forms an orthogonal basis of $L^2(\N(0,E_p))$, where $\N(0,E_p)$  denotes the $p$-dimensional standard normal distribution; see Section 3.2 in \cite{beran:feng:ghosh:kulik:2013}. Thus, for any square-integrable  $G:\RR^p\rightarrow\RR$ the following $L^2$-identity holds:
\begin{equation}\label{eq:Hermite_expansion}
G(\U)-\Ex G(\U)=\sum_{k=m(G,E_p)}^{\infty}\sum_{l_1+\ldots+l_p=k}\frac{J_{l_1,\ldots, l_p}}{l_1!\cdots l_p!}H_{l_1,\ldots,l_p}(\U),
\end{equation}
where $\U\sim\N(0,E_p)$. The Hermite coefficients are given by the inner product, that is $J_{l_1,\ldots, l_p}=\Ex\left(G(\U)H_{l_1,\ldots, l_p}(\U)\right)$. The starting index 
\begin{equation*}
m(G,E_p):=\min\left\{\sum_{i=1}^pl_i: J_{l_1,\ldots, l_p}\neq0\right\}
\end{equation*}
is called the Hermite rank of $G$. Since the left-hand side in \eqref{eq:Hermite_expansion} is centered, we have $m\geq1$.
In contrast to \eqref{eq:def_mvH} the definition of multivariate Hermite polynomials with respect to $\N(0,\Sigma)$ is more complicated; see \cite{beran:feng:ghosh:kulik:2013}, section 3.2. 
The Hermite rank is defined analogously
\begin{equation*}
m(G,\Sigma):=\min\left\{\sum_{i=1}^pl_i: \Ex \left(G(\X) H_{l_1,\ldots, l_p}(\X)\right)\neq0\right\},
\end{equation*}
where $\X\sim\N(0,\Sigma)$.

The Hermite expansion in \eqref{eq:Hermite_expansion} is crucial to determining the asymptotics of partial sums of the type
\begin{equation}
  \sum_{i=1}^n \left \{ f(X_i,\ldots,X_{i+p-1}) -\Ex f(X_1,\ldots,X_{p}) \right\},\label{eq:p-sum}
\end{equation}
where $f:\R^{p}\rightarrow \R$ satisfies $\Ex(f(X_1,\ldots,X_{p}))^2<\infty$.  

\section{Ordinal Patterns}
In this section we introduce the concept of ordinal pattern analysis and present asymptotic distributions for estimators of ordinal pattern probabilities, where functions with different Hermite ranks show up. We also provide examples for the calculation of the coefficients specifying the limiting distributions of these estimators for certain ordinal patterns. For detailed proofs of the given theorems the reader is referred to Section 5. 
\begin{definition}
Let $S_{h}$ denote the set of permutations of $\{0, \ldots, h\}$, which we write as $(h+1)$-tuples containing each of the numbers $0, \ldots, h$ exactly one time.
By the \emph{ordinal pattern} of order $h$  we refer to the permutation
\begin{align*}
\Pi(x_0, \ldots, x_h)=(\pi_0,\ldots, \pi_h)\in S_h
\end{align*}
 which satisfies
\begin{align*}
x_{\pi_0}\geq \ldots\geq x_{\pi_h}
\end{align*}
and $\pi_{i-1}>\pi_i$ if $x_{\pi_{i-1}}=x_{\pi_i}$ for $i=1,...,h-1$. 
\end{definition}
The latter is introduced in order to deal with ties which do not occur in our simulation study, but which might occur when dealing with real data.
\begin{remark}
Naturally, ordinal patterns are closely linked to the ranks of observations. 
Given observations $\xi_0, \ldots, \xi_h$, we define the rank $R_i$ of $\xi_i$ by
\begin{align*}
R_i:=\sum\limits_{j=0}^h 1_{\left\{\xi_i\leq \xi_j\right\}}.
\end{align*}
Note that if $\xi_i\neq \xi_j$ for all $i,j=0,\ldots,h$, $i\neq j$ then
\begin{align*}
R_i=j+1 \Leftrightarrow \pi_j=i.
\end{align*}
Thus, ranks provide a complete description of the order structure of the vector $(\xi_1, \ldots, \xi_n)$ equivalent to the description by ordinal patterns. 
\end{remark}
In this paper we are interested in estimating the probability  $p(\pi):=\Px(\Pi(\xi_0,\ldots,\xi_{h})=\pi)$ for a given time series $\XI=(\xi_t)_{t \geq 0}$ and therefore define the estimator
\[
  \hat{q}_n(\pi):= \frac{1}{n} \# \{0\leq i\leq n-1 : \Pi(\xi_i,\ldots,\xi_{i+h})=\pi  \}=\frac{1}{n}\sum\limits_{i=0}^{n-1}1_{\left\{\Pi(\xi_i, \xi_{i+1}, \ldots, \xi_{i+h})=\pi\right\}}
\]
for a time series $\left(\xi_t\right)_{t\geq 0}$.

We will see later (Remark \ref{rem:specialcase}) that the assumption that the time series $\left(\xi_t\right)_{t\geq 0}$ is stationary yields trivial limits.
Therefore, we relax this assumption and use a helpful relation that was derived in \cite{sinn:keller:2011}. They have shown that the estimator above is uniquely determined by the increments of this process. Let us consider $X_t:=\xi_t-\xi_{t-1}$ for $t \geq 1$.

For a vector $x=(x_0 \ldots, x_h)\in \mathbb{R}^{h+1}$ define
\begin{align}
\tilde{\Pi}(x_0,...,x_h):=\Pi(0, x_0, x_0+x_1, \ldots, x_0+\ldots +x_h).\label{incrementmapping}
\end{align}
Then, it holds that
\begin{align*}
\tilde{\Pi}(x_1-x_0,\ldots,x_h-x_{h-1})=\Pi(0,x_1-x_0,\ldots,x_h-x_0)
=\Pi(x_0,\ldots,x_h),
\end{align*}
since ordinal patterns are not affected by monotone transformations.\newline
In terms of random vectors we hence arrive at
\begin{align*}
\Pi(\xi_t, \xi_{t+1}, \ldots, \xi_{t+h})=\tilde{\Pi}(X_{t+1}, \ldots, X_{t+h}),~t\geq 0.
\end{align*}
\noindent
 In the following, we will study under which assumptions on the underlying time series we can derive an asymptotic result for the estimator $\hat{q}_n(\pi)$. Since regarding $\left(\xi_t\right)_{t\geq 0}$ as a stationary time series is not interesting due to the degenerate limit, we relax this assumption as follows: 
let $\XI=(\xi_t)_{t \geq 0}$ be a (possibly non-stationary) stochastic process and  let $\X=(X_t)_{t\geq 1}$ denote the corresponding increment process given by $X_t:=\xi_t-\xi_{t-1}$ for $t \geq 1$.
We assume that $\X$ is a stationary standard Gaussian process with autocovariance function
\begin{equation*}
r(k)=L(k)k^{-D},~~k\geq 1,
\end{equation*}
where $L$ is a function, slowly varying at infinity, and $0<D<1$.

We now rewrite the estimator $\hat{q}_n(\pi)$ in terms of the increment variables following the considerations in (\ref{incrementmapping}): 
\begin{align*}
\hat{q}_n(\pi)=\frac{1}{n}\sum_{i=0}^{n-1} 1_{\left\{\Pi(\xi_i,\ldots,\xi_{i+h})=\pi\right\}}=\frac{1}{n}\sum_{i=0}^{n-1} 1_{\left\{\tilde{\Pi}(X_{i+1},\ldots,X_{i+h})=\pi\right\}}
\end{align*}

We will show that the relative frequency of any ordinal pattern is a consistent estimator for the 
corresponding probability.

\begin{theorem}
Suppose that $(X_i)_{i\geq 1}$ is a stationary ergodic process. Then, $\hat{q}_n(\pi)$ is a consistent estimator of $p(\pi):=\Px(\tilde{\Pi}(X_1,\ldots,X_{h})=\pi)$. More precisely,
\[
 \lim_{n\rightarrow \infty }\hat{q}_n(\pi)=p(\pi)
\]
almost surely.
\label{th:consistent}
\end{theorem}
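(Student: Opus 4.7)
The plan is to recognize $\hat q_n(\pi)$ as a Birkhoff time average and apply the pointwise ergodic theorem. Setting $Y_i:=(X_{i+1},\ldots,X_{i+h})$ and $g(y):=1_{\{\tilde\Pi(y)=\pi\}}$, the estimator rewrites as
\begin{equation*}
\hat q_n(\pi)=\frac{1}{n}\sum_{i=0}^{n-1}g(Y_i),
\end{equation*}
with $g:\R^h\to\R$ bounded and measurable. The block process $(Y_i)_{i\geq 0}$ is a measurable factor of the shift acting on $(X_i)_{i\geq 1}$, since $Y_i$ depends only on $X_{i+1},\ldots,X_{i+h}$.

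Next I would transfer stationarity and ergodicity from $(X_i)_{i\geq 1}$ to this block process. Stationarity of $(Y_i)$ is immediate from stationarity of $(X_i)$: shifting the underlying sequence by one index maps $Y_i$ to $Y_{i+1}$. Ergodicity carries over because any shift-invariant event for $(Y_i)$ pulls back to a shift-invariant event in the $\sigma$-algebra generated by $(X_i)_{i\geq 1}$, which is trivial by hypothesis. Birkhoff's pointwise ergodic theorem, applied to $g$ (integrability is automatic from boundedness), then yields
\begin{equation*}
\lim_{n\to\infty}\hat q_n(\pi)=\Ex g(Y_0)=\Px(\tilde\Pi(X_1,\ldots,X_h)=\pi)=p(\pi) \qquad \text{almost surely},
\end{equation*}
which is precisely the assertion of the theorem.

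There is essentially no obstacle here: the indicator function is bounded, so $L^1$-integrability is trivial, and the summand at index $i$ is literally the summand at index $0$ composed with the $i$-th iterate of the shift. The only micro-step that merits a line of justification is the transfer of ergodicity to the block process, which is a standard fact. Alternatively, one may bypass the block process entirely and apply Birkhoff directly on the canonical path space of $(X_i)_{i\geq 1}$ to the $\sigma(X_1,\ldots,X_h)$-measurable function $1_{\{\tilde\Pi(X_1,\ldots,X_h)=\pi\}}$, obtaining the same conclusion.
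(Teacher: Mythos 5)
Your argument is correct and coincides with the paper's own proof (Proposition \ref{prop:consistency}), which applies Birkhoff's ergodic theorem on the canonical path space to the cylinder function $g((x_i)_{i\geq 1})=1_{\{\tilde\Pi(x_1,\ldots,x_h)=\pi\}}$ — exactly the alternative route you mention at the end. The block-process detour and the ergodicity-transfer remark are fine but unnecessary.
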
\noindent

\subsection{Limit distribution of $\hat{q}_n(\pi)$}
At first we need to determine the Hermite rank of the estimator. Here, and in what follows, proofs are postponed to Section 5.
\begin{lemma}\label{lemm: estimatorHR1}
Let $(X_k)_{k\geq 1}$ be a stationary standard normal Gaussian process and let $h\in\NN$. Then, for any $\pi\in S_h$, the Hermite rank of
\begin{equation*}
1_{\{\tilde{\Pi}(X_1,\ldots,X_{h})=\pi\}}-\Px(\tilde{\Pi}(X_1,\ldots,X_{h})=\pi)
\end{equation*}
is equal to 1.
\end{lemma}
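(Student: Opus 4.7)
The plan is to prove the two bounds on the Hermite rank separately. The lower bound $\geq 1$ is automatic because the function in question is centered by definition. For the upper bound it suffices to exhibit some $i \in \{1, \ldots, h\}$ with $\Ex(X_i \cdot 1_{A_\pi}) \neq 0$, where $A_\pi := \{\tilde{\Pi}(X_1, \ldots, X_h) = \pi\}$; the constant $-p(\pi)$ drops out against $X_i$ because $\Ex(X_i) = 0$, and the degree-$1$ multivariate Hermite polynomials coincide with the coordinate functions themselves regardless of the reference covariance, so this one inner product is what controls the rank.

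The main idea is to rewrite the event in terms of partial sums and exploit that $0$ is always one of the values whose order is being observed. Setting $Y_0 := 0$ and $Y_j := X_1 + \cdots + X_j$ for $j = 1, \ldots, h$, the event becomes $A_\pi = \{Y_{\pi_0} \geq Y_{\pi_1} \geq \cdots \geq Y_{\pi_h}\}$, so on this set $Y_{\pi_0}$ is the maximum and $Y_{\pi_h}$ the minimum among $\{Y_0, Y_1, \ldots, Y_h\}$. Since $0 = Y_0$ belongs to this collection, one obtains the free sign information $Y_{\pi_0} \geq 0 \geq Y_{\pi_h}$ pointwise on $A_\pi$.

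I would then split into two cases, using that $\pi_0$ and $\pi_h$ cannot both equal $0$ because $\pi$ is a permutation of $\{0, \ldots, h\}$ and $h \geq 1$. If $\pi_0 \geq 1$, then $Y_{\pi_0}$ is a non-degenerate Gaussian, hence $Y_{\pi_0} > 0$ almost surely on $A_\pi$, and combined with $\Px(A_\pi) > 0$ this yields $\Ex(Y_{\pi_0} 1_{A_\pi}) > 0$; expanding $Y_{\pi_0} = \sum_{i=1}^{\pi_0} X_i$ forces $\Ex(X_i 1_{A_\pi}) \neq 0$ for at least one $i \in \{1, \ldots, \pi_0\}$. The complementary case $\pi_0 = 0$ forces $\pi_h \geq 1$ and is treated symmetrically via $\Ex(Y_{\pi_h} 1_{A_\pi}) < 0$.

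The one technicality to verify is $\Px(A_\pi) > 0$ for every permutation, which is what turns the strict sign information into a strictly nonzero expectation. This holds because the interior of $A_\pi$ in $\R^h$, cut out by the strict inequalities among the $Y_j$'s, is a non-empty open set for every $\pi$ (a representative can be constructed by ordering $Y_0, \ldots, Y_h$ as $h, h-1, \ldots, 0$ in the order prescribed by $\pi$ and then shifting so that $Y_0 = 0$), and the law of $(X_1, \ldots, X_h)$ is a non-degenerate Gaussian measure on $\R^h$. I do not anticipate any substantial obstacle beyond this routine check.
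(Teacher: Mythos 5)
Your proof is correct, and it takes a genuinely different route from the paper's. The paper first reduces to independent standard normal coordinates: writing $\Sigma_h=AA^t$, it invokes the auxiliary comparison result $m(f\circ A,E_h)\le m(f,E_h)$ (Lemma \ref{lemma_1}, after a harmless rescaling so that $\Sigma_h^{-1}-E_h$ is positive semidefinite), and then computes the first-order coefficient explicitly only for the monotone pattern $\pi=(h,\dots,0)$, where the event becomes $\{Y_1\ge0,\dots,Y_h\ge0\}$ for independent $Y_i$ and the coefficient equals $(1/2)^{h-1}\varphi(0)$; the remaining patterns are dismissed with ``by the same reasoning.'' You instead work directly with the correlated vector and give a single argument valid for every $\pi$: because $0$ is one of the $h+1$ ranked values, the extremal positions of the pattern yield the free sign information that the partial sum indexed by $\pi_0$ is nonnegative and the one indexed by $\pi_h$ is nonpositive on $A_\pi$, and telescoping that partial sum into $X_1,\dots,X_{\pi_0}$ (resp.\ $X_1,\dots,X_{\pi_h}$) forces some coefficient $\Ex(X_i 1_{A_\pi})$ to be nonzero once $\Px(A_\pi)>0$ and nondegeneracy of the partial sum are checked. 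Your version buys a complete, uniform treatment of all patterns and avoids the rank-comparison lemma entirely (though that lemma is reused in the paper for the Hermite-rank-$2$ case, so it is not wasted there); your observation that the degree-one Hermite polynomials are just the coordinates, independently of the reference covariance, is exactly the right justification for testing only against $\Ex(X_i 1_{A_\pi})$. The one standing hypothesis both arguments need is that $\Sigma_h$ is nonsingular — you use it for the nondegeneracy of the partial sum and for positivity of the Gaussian measure on nonempty open sets, the paper uses it for the Cholesky factorization — and it holds for the long-range dependent processes the paper actually considers.
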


We now give the asymptotic distribution of the estimator and in doing so, we will take a closer look at the Hermite coefficients which determine the limit variance and hence the limit distribution. 

\begin{theorem}\label{th:lt-he}
Let $\XI=(\xi_t)_{t\geq 0}$ be a stochastic process and  let $\X=(X_t)_{t\geq 1}$ denote the increment process of $\XI$ given by $X_t:=\xi_t-\xi_{t-1}$ for $t\in \mathbb{Z}$.
Assume that $\X$ is a stationary, long-range dependent standard Gaussian process with  autocovariance function
$r(k)=L(k)k^{-D}$. Then,
\[
  n^{D/2}L^{-1/2}(n)\left(\hat{q}_n(\pi)
  -\Px\left(\tilde{\Pi}(X_{1},\ldots,X_{h})=\pi\right)\right) \xrightarrow{D} \N \left(0,c_D \left(\sum_{j=1}^{h} \alpha_j\right)^2\right),
\]
where $c_D=\frac{2}{(1-D)(2-D)}$ and where the vector $\alpha=(\alpha_1,\ldots,\alpha_{h})^t$ is given by
\[
  \alpha:=\Sigma_p^{-1} c
\]
with $c=(c_1,\ldots,c_h)^t$ defined by
\[
  c_k=\Ex \left\{1_{\left\{\tilde{\Pi}(X_1, \ldots, X_{h})=\pi\right\}} X_k\right\}, \; 1\leq k\leq h.
\]
\end{theorem}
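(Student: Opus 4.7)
The plan is to apply the general central limit theorem for partial sums of Hermite rank $1$ functions of a multivariate long-range dependent stationary Gaussian process (established in Section 5) to the centred indicator $G(x_1,\ldots,x_h) := 1_{\{\tilde{\Pi}(x_1,\ldots,x_h)=\pi\}}-p(\pi)$, whose Hermite rank equals $1$ by Lemma \ref{lemm: estimatorHR1}. The remaining task is to identify the rank-$1$ coefficients and to rewrite the limit in the explicit form stated in the theorem.

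The rank-$1$ part of $G(X_1,\ldots,X_h)$ is the $L^2$-projection of $G$ onto $\mathrm{span}\{X_1,\ldots,X_h\}\subset L^2(\N(0,\Sigma_h))$, where $\Sigma_h=(r(|j-k|))_{1\leq j,k\leq h}$ is the covariance matrix of $(X_1,\ldots,X_h)$; write this projection as $\sum_{k=1}^h\alpha_k X_k$. The orthogonality equations $\Ex\bigl[\bigl(G(X_1,\ldots,X_h)-\sum_k\alpha_k X_k\bigr)X_j\bigr]=0$ for $j=1,\ldots,h$ translate into $\Sigma_h\alpha=c$, giving $\alpha=\Sigma_h^{-1}c$ as in the statement. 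Hence $G(X_1,\ldots,X_h)=\sum_{k=1}^h\alpha_k X_k + R(X_1,\ldots,X_h)$ with $R$ of Hermite rank at least $2$, and summing over the $n$ shifted windows yields
\begin{align*}
n\bigl(\hat{q}_n(\pi)-p(\pi)\bigr)
&= \sum_{k=1}^h\alpha_k\sum_{i=0}^{n-1}X_{i+k}+\sum_{i=0}^{n-1}R(X_{i+1},\ldots,X_{i+h}) \\
&= \Bigl(\sum_{k=1}^h\alpha_k\Bigr)\sum_{j=1}^n X_j + O_P(1) + \sum_{i=0}^{n-1}R(X_{i+1},\ldots,X_{i+h}),
\end{align*}
where the $O_P(1)$ absorbs the $O(1)$ boundary terms in the reindexing $\sum_{i=0}^{n-1}X_{i+k}=\sum_{j=1}^n X_j+\sum_{j=n+1}^{n+k-1}X_j-\sum_{j=1}^{k-1}X_j$, each having uniformly bounded variance. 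Multiplying by $n^{D/2-1}L^{-1/2}(n)$ and invoking the classical Hermite rank $1$ CLT $n^{D/2}L^{-1/2}(n)\sum_{j=1}^n X_j\xrightarrow{D}\N(0,c_D)$ with $c_D=\frac{2}{(1-D)(2-D)}$ produces the claimed $\N\bigl(0,c_D(\sum_k\alpha_k)^2\bigr)$ limit for the leading term.

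The main technical obstacle is to show that the remainder $\sum_i R(X_{i+1},\ldots,X_{i+h})$ is of smaller order than the normalisation $n^{1-D/2}L^{1/2}(n)$. Since $R$ has Hermite rank at least $2$ and is evaluated on overlapping windows of a multivariate long-range dependent Gaussian process in the sense of \cite{arcones:1994}, a variance computation via Hermite expansion shows that its partial-sum standard deviation is of order at most $\max\bigl(n^{1-D}L(n),\,n^{1/2}\bigr)$, which is strictly $o\bigl(n^{1-D/2}L^{1/2}(n)\bigr)$ for every $D\in(0,1)$ (the first ratio equals $n^{-D/2}L^{1/2}(n)\to 0$, the second equals $n^{(D-1)/2}L^{-1/2}(n)\to 0$). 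This negligibility estimate is contained in the general Hermite rank $2$ partial-sum limit theorem proved in Section 5, which I would apply directly to close the argument.
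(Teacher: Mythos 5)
Your proposal is correct and follows essentially the same route as the paper: reduce to the first-order term of the Hermite expansion (the paper invokes Arcones' reduction theorem for this, which is exactly your variance bound on the rank-$\geq 2$ remainder), identify the linear coefficients as $\alpha=\Sigma_h^{-1}c$, and conclude from the exact Gaussianity of $\sum_{j=1}^n X_j$ with $\Var(\sum_{j=1}^n X_j)\sim c_D\, n^{2-D}L(n)$. The only cosmetic difference is that you obtain $\alpha$ directly from the normal equations of the $L^2$-projection onto $\operatorname{span}\{X_1,\ldots,X_h\}$, whereas the paper first standardizes via the Cholesky factor $A$ of $\Sigma_h$ and then undoes the transformation, arriving at the same $\alpha=(A^{-1})^tA^{-1}c=\Sigma_h^{-1}c$.
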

\noindent
Thus, in order to compute the limit variance of $\hat{q}_n(\pi)$, we have to calculate the constants $c_k$ for $k=1,...,h$. 
We can reduce the number of calculations by making use of
the time and space symmetry of stationary multivariate normal random vectors. For a normal random vector $\left(X_1,\ldots,X_{h}\right)$ these are given by
\begin{align*}
\left(X_1,\ldots,X_{h}\right)&\overset{D}{=}\left(-X_1,\ldots,-X_{h}\right),\\
\left(X_1,\ldots,X_{h}\right)&\overset{D}{=}\left(X_{h},...,X_{1}\right).
\end{align*}
\par
Following \cite{sinn:keller:2011}, p. 1784, we define two mappings:\par
\begin{align*}
&\mathcal{S}:S_{h}\rightarrow S_{h}, \left(\pi_0,\ldots,\pi_{h}\right)\mapsto \left(\pi_h,\ldots,\pi_0\right),\\
&\mathcal{T}:S_{h}\rightarrow S_{h}, \left(\pi_0,\ldots,\pi_{h}\right)\mapsto \left(h-\pi_0,\ldots,h-\pi_h\right),
\end{align*}
\noindent
\begin{figure}[h]
\begin{center}
\begin{tikzpicture}
[scale =1]
\begin{axis}[ height =3cm,
  width=10cm,
axis x line=none,
axis y line=none
]
\addplot [mark=*,blue,line width =1.5pt] coordinates {
(2,0.1)(3,0.6)(4,0.4)(5,0.5)};
\addplot [mark=*,blue,line width =1.5pt] coordinates {
(7,0.5)(8,0.4)(9,0.6)(10,0.1)};
\end{axis}
\end{tikzpicture}
\newline
 $\pi=(1,3,2,0)$  \quad \quad \quad $\mathcal{T}(\pi)=(2,0,1,3)$ \quad \quad \quad  \quad \quad \quad\quad
\end{center}
\end{figure}
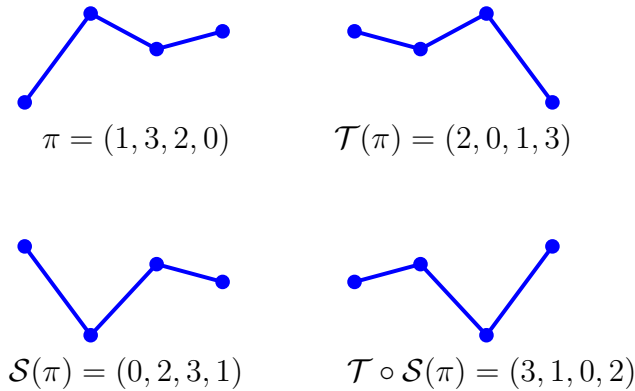
\begin{figure}[h]
\begin{center}
\begin{tikzpicture}
[scale =1]
\begin{axis}[ height =3cm,
  width=10cm,
axis x line=none,
axis y line=none
]
\addplot [mark=*,blue,line width =1.5pt] coordinates {
(2,0.6)(3,0.1)(4,0.5)(5,0.4)};
\addplot [mark=*,blue,line width =1.5pt] coordinates {
(7,0.4)(8,0.5)(9,0.1)(10,0.6)};
\end{axis}
\end{tikzpicture}
\newline
$\mathcal{S}(\pi)=(0,2,3,1)$ \quad \quad \quad   $\mathcal{T}\circ\mathcal{S}(\pi)=(3,1,0,2)$  \quad \quad \quad  \quad \quad 
\caption{Space and time reversion of the pattern $\pi=(1,3,2,0)$.}
\label{fig: figure 1}
\end{center}
\end{figure}~

\noindent
Graphically, the mapping $\mathcal{S}$ can be considered as  space reversal, i.e., as
 the reflection of $\pi$ on a horizontal line, while $ \mathcal{T}$ can be considered as time reversal, i.e., 
as the reflection of $\pi$ on a vertical line.\\
For each $\pi\in S_{h}$, we define 
\begin{align}
\bar{\pi}:=\{\pi,\mathcal{S}(\pi),\mathcal{T}(\pi),\mathcal{T}\circ\mathcal{S}(\pi)\}.
\end{align}
It is easily seen that the set $\bar{\pi}$ is closed under $\mathcal{S}$ and $\mathcal{T}$, since $\mathcal{S}\circ\mathcal{S}(\pi)=\mathcal{T}\circ\mathcal{T}(\pi)=\pi$ and $\mathcal{T}\circ\mathcal{S}(\pi)=\mathcal{S}\circ\mathcal{T}(\pi)$. This yields a partition of $S_h$ into sets each having either two or four elements, depending on whether $\mathcal{T}(\pi)=\mathcal{S}(\pi)$ holds for the considered $\pi$.

In \cite{sinn:keller:2011}, p.1786 and Lemma 1, it is shown that with respect to ordinal patterns the above considerations yield
\begin{align}
&\Ex\left(X_k1_{\left\{\tilde{\Pi}\left(X_1,...,X_{h}\right)=\pi\right\}}\right)=-\Ex\left(X_k1_{\left\{\tilde{\Pi}\left(X_1,...,X_{h}\right)=\mathcal{S}(\pi)\right\}}\right)\label{eq:spacereversion},~~k=1,...,h, \\
&\Ex\left(X_k1_{\left\{\tilde{\Pi}\left(X_1,...,X_{h}\right)=\pi\right\}}\right)=-\Ex\left(X_{h+1-k} 1_{\left\{\tilde{\Pi}\left(X_1,...,X_{h}\right)=\mathcal{T}(\pi)\right\}}\right)\label{eq:timereversion},~~k=1,...,h.
\end{align}
Both equations follow from the space and time symmetry of the multivariate normal distribution. More precisely, \eqref{eq:timereversion} holds since ordinal patterns are not affected by monotone transformations. For $\pi\in S_h$ we have
\begin{align*}
\left\{\tilde{\Pi}\left(X_1,...,X_{h}\right)=\mathcal{T}(\pi)\right\}
&=\left\{\mathcal{T}\left(\tilde{\Pi}(X_1,\ldots,X_h)\right)=\pi\right\}\\
&=\left\{\mathcal{T}\left(\Pi(0,X_1,X_1+X_2,\ldots,X_1+\ldots+X_h)\right)=\pi\right\}\\
&=\left\{\Pi(X_1+\ldots+X_h,\ldots,X_1+X_2,X_1,0)=\pi\right\}\\
&=\left\{\Pi(0,-X_h,-(X_h+X_{h-1}),\ldots,-(X_1+...+X_h))=\pi\right\}\\
&=\left\{\tilde{\Pi}(-X_{h},-X_{h-1},\ldots,-X_2,-X_1)=\pi\right\}.
\end{align*}
We compute the limit variance for ordinal patterns of lengths $p=2$ and $p=3$, i.e., we need to study increments of length $h=1$ and $h=2$.
As it is common in the literature, we restrict ourselves to small $h$ in the present article. Unfortunately, the computations for larger values of $h$ exceed the computing capacity of Mathematica.\par
Given the symmetry relations in \eqref{eq:spacereversion} and \eqref{eq:timereversion}, we only need to calculate the Hermite coefficients of the estimator $\hat{q}_n(\pi)$ for one pattern $\pi$ of each reversion group. Regarding $S_1=\{(0,1),(1,0)\}$ it is sufficient to choose $(1,0)$. Regarding $S_2$ we can partition this set into the two subsets $\{(2,1,0),(0,1,2)\}$ and $\{(2,0,1),(0,2,1),(1,2,0),(1,0,2)\}$. In the following we will study the Hermite coefficients of $\hat{q}_n(\pi)$ for $\pi=(2,1,0)$ and $\pi=(2,0,1)$ so that we can reduce the number of lengthy calculations since we only need to consider two ordinal patterns instead of six.
\begin{example}[Ordinal patterns of length $p=2$]
In the case $h=1$  there are only two possible patterns: $\pi=(0,1)$ and the corresponding spatial (or time) reverse $\pi=(1,0)$. We focus on $\pi=(1,0)$.  This pattern corresponds to the event $\left\{\Pi(\xi_0,\xi_1)=(1,0)\right\}=\left\{\xi_1\geq \xi_0\right\}=\left\{X_1\geq 0\right\}$. Hence, we consider
\begin{align*}
c_1=\Ex\left(X_1 1_{\left\{X_1\geq 0\right\}}\right)=\int_0^{\infty}y_1 \varphi(y_1)\mathrm{d}y_1=\varphi(0).
\end{align*}
Correspondingly, we obtain $c_1=-\varphi(0)$ for $\pi=(0,1)$ since this is the spatial reversion of $(1,0)$.
Thus, for these two ordinal patterns we arrive at a limit distribution of $q_n(\pi)$ given by $\mathcal{N}\left(0,c_D\varphi^2(0)\right)$, where $c_D=\frac{2}{(1-D)(2-D)}$.
\end{example}

We continue with the calculation of the limit variances in the case $p=3$. The integrals under consideration were solved by using Mathematica as well as a lengthy calculations that make use of  the Cholesky decomposition (cf. the Appendix).

\begin{example}[Ordinal patterns of length $p=3$]\label{ex: forcholesky}
First, we study the limit variance for $\pi=(2,1,0)$. In this case, $\bar{\pi}$ has two elements. Note that $\{\Pi(\xi_0,\xi_1,\xi_2)=(2,1,0)\}=\left\{\xi_2\geq \xi_1\geq \xi_0\right\}=\left\{X_2\geq 0,X_1\geq 0\right\}$. Due to the symmetry of the bivariate normal distribution, we obtain $c_1=c_2$,  so that we only need to calculate 
\begin{align*}
c_1=\Ex\left(X_1 1_{\left\{X_2\geq 0,X_1\geq 0\right\}}\right)=\int_0^{\infty}\int_0^{\infty} y_1 \varphi_{\left(X_1,X_2\right)}(y_1,y_2)\mathrm{d}y_1\mathrm{d}y_2=\frac{\varphi(0)}{2}\left(1+r(1)\right),
\end{align*}
where $\varphi_{\left(X_1,X_2\right)}$ denotes the joint density of $(X_1,X_2)$ .
Hence,
\begin{align*}
\sum_{j=1}^2 \alpha_j=2c_1\left(g_{1,1}+g_{2,1}\right)=2c_1\frac{1-r(1)}{1-\left(r(1)\right)^2}=\varphi(0),
\end{align*}
where $g_{i,j}$ are the entries of $\Sigma_2^{-1}$ given by
\begin{align*} \Sigma_2^{-1}=\frac{1}{1-(r(1))^2}\left( 
 \begin{array}{rr}
 1 & -r(1)  \\
  -r(1) & 1
 \end{array}  
   \right).
 \end{align*}
Again, we obtain the limit variance $c_D\varphi^2(0)$ which is here more surprising than in the case $h=1$ because the result is independent of $r(1)$.
For the space reverse pattern $\pi_2=(0,1,2)$ we apply \eqref{eq:spacereversion} and obtain $c_1=-\phi(0)$ leading to the same limit variance.
It is an interesting question whether it is just a coincidence that this variance is independent of the covariance between the increments. The answer turns out to be yes, since the dependence is reflected in the limit variance of the pattern $\pi=(2,0,1)$.

Note that $\left\{\Pi(\xi_0,\xi_1,\xi_2)=(2,0,1)\right\}=\left\{\xi_1\leq \xi_0\leq \xi_2\right\}=\left\{X_1\leq 0,X_1+X_2\geq 0\right\}=\left\{X_1\leq 0,X_2\geq -X_1\right\}$. As a result, we have
\begin{align*}
c_1&=\Ex\left(X_1 1_{\left\{X_1\leq 0,X_2\geq -X_1\right\}}\right)=\int_{-\infty}^0 \int_{-y_1}^{\infty} y_1 \varphi_{\left(X_1,X_2\right)}(y_1,y_2)\mathrm{d}y_2\mathrm{d}y_1\\
&=\frac{\varphi(0)}{2}\left(\frac{\sqrt{1+r(1)}}{\sqrt{2}}-1\right),\\
c_2&=\Ex\left(X_2 1_{\left\{X_1\leq 0,X_2\geq -X_1\right\}}\right)=\int_{-\infty}^0 \int_{-y_1}^{\infty} y_2 \varphi_{\left(X_1,X_2\right)}(y_1,y_2)\mathrm{d}y_2\mathrm{d}y_1\\
&=\frac{\varphi(0)}{2}\left(\frac{\sqrt{1+r(1)}}{\sqrt{2}}-r(1)\right),
\end{align*}
where $\varphi_{\left(X_1,X_2\right)}$ denotes the joint density of $(X_1,X_2)$.
As a result,  we obtain 
\begin{align*}
\sum_{j=1}^2 \alpha_j=\left(c_1+c_2\right)\left(g_{1,1}+g_{2,1}\right)&=\frac{\varphi(0)}{2}\left(\frac{\sqrt{2(1+r(1))}-(1+r(1))}{1+r(1)}\right)\\
&=\frac{\varphi(0)}{2}\left(\frac{\sqrt{2}}{\sqrt{1+r(1)}}-1\right).
\end{align*}
The above expression depends on $r(1)$. Due to space and time symmetry discussed in \eqref{eq:spacereversion} and \eqref{eq:timereversion} all permutations that belong to the reversion group of $\pi=(2,0,1)$, i.e., $(1,0,2),(0,2,1)$ and $(1,2,0)$, lead to the same limit distribution for $\hat{q}_n(\pi)$, namely 
\[
\mathcal{N}\left(0,c_D\left(\frac{\varphi(0)}{2}\left(\frac{\sqrt{2}}{\sqrt{1+r(1)}}-1\right)\right)^2\right).
\]
\end{example}
\subsection{Limit distribution of an improved estimator based on Rao-Blackwellization}
In the previous section we considered the natural estimator for the frequency of a certain ordinal pattern. However,  in \cite{sinn:keller:2011} it is shown that the estimator which results from averaging  the  estimates of the same reversion class  has better statistical properties. The corresponding estimator is therefore defined by
\begin{align*}
\hat{p}_n(\pi):= \frac{1}{n} \sum\limits_{t=0}^{n-1} \frac{1}{\# \bar{\pi}}1_{\left\{\Pi(\xi_t, \xi_{t+1}, \ldots, \xi_{t+h})\in \bar{\pi}\right\}}, 
\end{align*}
where $\#\bar{\pi}$ denotes the cardinality of the set $\bar{\pi}$.

\noindent
Recalling that $\Pi(\xi_t, \xi_{t+1}, \ldots, \xi_{t+h})=\tilde{\Pi}(X_{t+1},\ldots, X_{t+h})$, we are, in particular, interested 
in the function $f:\mathbb{R}^h\longrightarrow \mathbb{R}$ defined by
\begin{align}\label{eq: function improved estimator}
f(x_1,\ldots,x_h):=\frac{1}{\# \bar{\pi}}1_{\{\tilde{\Pi}(x_1,\ldots,x_h)\in \bar{\pi}\}}-\frac{1}{\# \bar{\pi}}P\left(\tilde{\Pi}(x_1,\ldots,x_h)\in \bar{\pi}\right).
\end{align}
In order to specify the limit distribution of $\hat{p}_n(\pi)$, we need to determine the Hermite rank of this function. For this,  note that \cite{sinn:keller:2011}, p. 1786, show that $f$ has Hermite rank $m\geq 2$.

For a multivariate random vector $\left(X_1,...,X_{h}\right)\sim\mathcal{N}(0,\Sigma_h)$ define
\begin{align*}
c_{i,i}^{\pi}:=\Ex\left[\left(X_i^2-1\right) 1_{\left\{\tilde{\Pi}\left(X_1,...,X_{h}\right)=\pi\right\}}\right]&=\Ex\left[\left(X_i^2-1\right) 1_{\left\{\tilde{\Pi}\left(X_1,...,X_{h}\right)=\mathcal{S}(\pi)\right\}}\right]\\
&=\Ex\left[\left(X_{h+1-i}^2-1\right) 1_{\left\{\tilde{\Pi}\left(X_1,...,X_{h}\right)=\mathcal{T}(\pi)\right\}}\right],~i=1,...,h.
\end{align*}
Analogously, we obtain
\begin{align*}
c_{i,j}^{\pi}&:=\Ex\left[ \left(X_i X_j - \Ex\left(X_i X_j\right)\right) 1_{\left\{\tilde{\Pi}\left(X_1,...,X_{h}\right)=\pi\right\}}\right]\\
&=\Ex\left[ \left(X_i X_j - \Ex\left(X_i X_j\right)\right)1_{\left\{\tilde{\Pi}\left(X_1,...,X_{h}\right)=\mathcal{S}(\pi)\right\}}\right]\\
&=\Ex\left[ \left(X_{h+1-i} X_{h+1-j} - \Ex\left(X_{h+1-i} X_{h+1-j}\right)\right)1_{\left\{\tilde{\Pi}\left(X_1,...,X_{h}\right)=\mathcal{T}(\pi)\right\}}\right],
\end{align*}
$i,j=1,...,h,~i\neq j,$ 
so that alltogether we derive 
\begin{align}\label{eq: relations hermite coefficients}
c_{i,j}^{\pi}=c_{i,j}^{\mathcal{S}(\pi)}=c_{h+1-i,h+1-j}^{\mathcal{T}(\pi)}=c_{h+1-i,h+1-j}^{\mathcal{T}\circ \mathcal{S}(\pi)},~i,j=1,...,h.
\end{align}
With this result we  can simplify the second order Hermite coefficients for the improved estimator
\begin{align*}
c_{i,j}:=\Ex\left[ \left(X_i X_j - \Ex\left(X_i X_j\right)\right) f\left(X_1,...,X_{h}\right)\right]&=\frac{1}{\#\bar{\pi}}\sum\limits_{\pi\in\bar{\pi}}\Ex\left[	 \left(X_i X_j - \Ex\left(X_i X_j\right)\right)   1_{\left\{\tilde{\Pi}\left(X_1,...,X_{h}\right)=\pi\right\}}     \right]\\
&=\frac{1}{\#\bar{\pi}}\sum\limits_{\pi\in\bar{\pi}}c_{i,j}^{\pi}\\
&=\frac{1}{2}\left(c_{i,j}^{\pi}+c_{h+1-i,h+1-j}^{\pi}\right).
\end{align*}
Analogously, we obtain
\begin{align*}
c_{i,i}=\frac{1}{2}\left(c_{i,i}^{\pi}+c_{h+1-i,h+1-i}^{\pi}\right).
\end{align*}
Hence, we can uniquely determine the second order Hermite coefficients of the improved estimator  by calculating the second order Hermite coefficients for only one pattern $\pi$ that belongs to the considered reversion group $\bar{\pi}$. By following the symmetry properties discussed above we derive for the special case $\mathcal{T}\circ\mathcal{S}(\pi)=\pi$ 
\begin{align*}
c_{i,j}=c_{i,j}^{\pi},~~\pi\in\bar{\pi}
\end{align*}
for all $i,j=1,...,h$.

The second order Hermite coefficients of the improved estimator $\hat{q}_n(\pi)$ are equal to the second order Hermite coefficients of $\hat{p}_n(\pi)$.\newline
We use this result to determine the Hermite rank of the function $f$ defined in \eqref{eq: function improved estimator}, for details see Section 5, and to simplify the calculations concerning the parameters determining the variance in the next Theorem \ref{th:lt-ie}.

\begin{lemma}\label{lemm: estimatorHR2}
The function
$f(x_1,...,x_h):=\frac{1}{\# \bar{\pi}}1_{\{\tilde{\Pi}(x_1,...,x_h)\in \bar{\pi}\}}-\frac{1}{\# \bar{\pi}}\Px\left(\tilde{\Pi}(x_1,...,x_h)\in \bar{\pi}\right)$ has Hermite rank $m(f,\Sigma_h)=2$.
\end{lemma}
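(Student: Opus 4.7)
The plan is to sharpen the Sinn--Keller inequality $m(f,\Sigma_h)\geq 2$ to equality by exhibiting a single nonzero second-order Hermite coefficient of $f$. Recall that the Hermite rank is the smallest $k$ for which there exists a polynomial $Q$ of degree $k$ with $\Ex[f(\X)Q(\X)]\neq 0$.

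First I would observe that $f$ is an even function of $\X$. Negating every coordinate reverses the ordering of the partial sums $0,X_1,X_1+X_2,\ldots,X_1+\cdots+X_h$, so $\tilde\Pi(-x_1,\ldots,-x_h)=\mathcal{S}(\tilde\Pi(x_1,\ldots,x_h))$. Since $\bar\pi$ is closed under $\mathcal{S}$, the indicator $1_{\{\tilde\Pi(\X)\in\bar\pi\}}$ is invariant under $\X\mapsto-\X$, and hence $f(-\X)=f(\X)$ almost surely. The multivariate Hermite polynomials with respect to $\cn(0,\Sigma_h)$ have parity equal to their total degree, because they are obtained by differentiating the even Gaussian density. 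Consequently, for $H_l^{\Sigma_h}$ of odd total degree one has $\Ex[f(\X)H^{\Sigma_h}_l(\X)]=-\Ex[f(\X)H^{\Sigma_h}_l(\X)]=0$, so every odd-order Hermite coefficient of $f$ vanishes. Combined with the Sinn--Keller lower bound this forces $m(f,\Sigma_h)\in\{2,4,6,\ldots\}$, and it only remains to rule out rank $\geq 4$.

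To do so, I would produce a single pair $(i,j)$ with $c_{i,j}\neq 0$, using the formula derived just before the lemma, $c_{i,j}=\tfrac12(c^{\pi}_{i,j}+c^{\pi}_{h+1-i,h+1-j})$ for an arbitrary representative $\pi\in\bar\pi$. Each summand $c^{\pi}_{i,j}=\Ex[(X_iX_j-\Ex(X_iX_j))1_{\{\tilde\Pi(\X)=\pi\}}]$ is an integral of a centered quadratic against the Gaussian density over the polyhedral cone $A_\pi$ cut out by signs of partial sums of $X_1,\ldots,X_h$. Decorrelating by a Cholesky decomposition, as in Example \ref{ex: forcholesky}, reduces it to an explicit finite combination of $\varphi$, $\Phi$ and the covariances $r(k)$, which one verifies to be nonzero.

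The main obstacle is that this last verification must be carried out for every reversion class $\bar\pi$, not only the representatives treated explicitly in Section 3, and in particular one must guard against accidental cancellation between the two summands $c^{\pi}_{i,j}$ and $c^{\pi}_{h+1-i,h+1-j}$. A convenient workaround is to choose $(i,j)$ fixed by the involution $k\mapsto h+1-k$, so that the two summands coincide and the question reduces to a single nonvanishing Gaussian integral. Failing that, $c_{i,j}$ depends analytically on the off-diagonal entries of $\Sigma_h$, so identical vanishing would persist at the uncorrelated limit $\Sigma_h=E_h$, where the product structure of the Gaussian measure makes the Hermite expansion fully explicit and the nonvanishing of at least one second-order coefficient can be read off directly from the sign-pattern combinatorics of $\bar\pi$.
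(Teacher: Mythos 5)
Your parity argument is correct and cleanly recovers the lower bound: $f$ is indeed even, because $\tilde{\Pi}(-x_1,\ldots,-x_h)=\mathcal{S}(\tilde{\Pi}(x_1,\ldots,x_h))$ and $\bar{\pi}$ is closed under $\mathcal{S}$; since the product Hermite polynomials satisfy $H_{l_1,\ldots,l_h}(-x)=(-1)^{l_1+\cdots+l_h}H_{l_1,\ldots,l_h}(x)$ while $\X\overset{D}{=}-\X$, every odd-order coefficient vanishes and the rank must be even. Your observation that unordered pairs with $i+j=h+1$ are fixed by the reversal involution, so that $c_{i,j}=c^{\pi}_{i,j}$ with no risk of cancellation, is also consistent with what happens in Example \ref{ex: coeff}.

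The gap is in the upper bound, i.e.\ in showing that some second-order coefficient is nonzero \emph{for the correlated vector} $\X\sim\N(0,\Sigma_h)$. Your primary plan --- evaluating $c^{\pi}_{i,j}$ by Cholesky decorrelation for every reversion class at the actual $\Sigma_h$ --- is not carried out, and you yourself flag it as an obstacle. Your fallback via analyticity does not close it: knowing that $\Sigma\mapsto c_{i,j}(\Sigma)$ is analytic and that $c_{i,j}(E_h)\neq 0$ only shows that $c_{i,j}$ does not vanish identically; it does not exclude vanishing at the particular Toeplitz matrix $\Sigma_h$ determined by the autocovariance $r$, which is what the lemma asserts. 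The paper supplies exactly the missing tool: Lemma \ref{lemma_1} (built on Lemma \ref{lemma:L2-convergence}) shows that, after normalizing so that $\Sigma_h^{-1}-E_h$ is positive semidefinite (harmless, since ordinal patterns are scale-invariant), one has $m(f,\Sigma_h)=m(f\circ A,E_h)\leq m(f,E_h)$. This legitimately reduces the nonvanishing check to independent coordinates, where the integral factorizes and gives, e.g., $b^{\pi}_{jk}=\varphi^2(0)/2^{h-2}\neq 0$ for $\pi=(h,\ldots,0)$ and $j\neq k$. Without this (or an equivalent) comparison of the Hermite rank under the linear change of variables, your argument establishes the conclusion only for generic covariance matrices, not for the $\Sigma_h$ at hand.
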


\begin{remark}
By a similar calculation we obtain that $b^{\pi}_{jj}=0$ for all $j=1,...,h$ for the fixed pattern in the setting above.
\end{remark}

Following the above Lemma,
we derive the asymptotic distribution of the new estimator:
\begin{theorem}\label{th:lt-ie}
Let $\XI=(\xi_t)_{t\geq 0}$ be a stochastic process and  let $\X=(X_t)_{t\geq 1}$ denote the process of increments of $\XI$ given by $X_t:=\xi_t-\xi_{t-1}$ for $t\geq 1$.
Assume that $\X$ is a stationary, long-range dependent standard Gaussian process with  autocovariance function
$r(k)=L(k)k^{-D}$.  Then, if $D\in (0, \frac{1}{2})$,
\begin{align}\label{convergence improved estimator}
n^{D}(2!C_2)^{-\frac{1}{2}}L^{-1}(n) \left(\hat{p}_n(\pi)-\Px(\tilde{\Pi}(X_1,...,X_h)=\pi)\right) \overset{\mathcal{D}}{\longrightarrow}Z_{2, (1-D/2)}(1)
\sum\limits_{j=1}^{h}\sum\limits_{k=1}^{h}\alpha_{j, k},
\end{align}
with 
$C_2=\left((1-2D)(2-D)\right)^{-1}$, 
$\left(\alpha_{l,k}\right)_{1\leq l,k\leq h}=\Sigma_{h}^{-1}C\Sigma_{h}^{-1}$ and  
\begin{align*}
C=\Ex\left((X_1,...,X_h)\frac{1}{\#\bar{\pi}}\left[1_{\left\{\tilde{\Pi}(X_1,...,X_h)\in\bar{\pi}\right\}}-\Px(\tilde{\Pi}(X_1,...,X_h)\in\bar{\pi})\right](X_1,...,X_h)^t\right).
\end{align*}
\end{theorem}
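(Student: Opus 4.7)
The plan is to write
\begin{equation*}
\hat{p}_n(\pi) - \Px(\tilde{\Pi}(X_1,\ldots,X_h)=\pi) = \frac{1}{n}\sum_{i=0}^{n-1} f(X_{i+1},\ldots,X_{i+h}),
\end{equation*}
with $f$ the function from \eqref{eq: function improved estimator}. By Lemma \ref{lemm: estimatorHR2}, $f$ has Hermite rank $m(f,\Sigma_h) = 2$, and since $0 < D < 1/2$ we have $2D<1$, so the partial sum falls into the non-central limit theorem regime for Hermite-rank-$2$ functions of a long-range dependent Gaussian process. The normalization $n^{1-D}L(n)$ of the sum (equivalently $n^{D}L^{-1}(n)$ of $\hat{p}_n-p$) is precisely the order dictated by Taqqu-type results, and the limit is expected to be a Rosenblatt random variable $Z_{2,1-D/2}(1)$ scaled by a deterministic constant.

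The core step is a general non-central limit theorem for Hermite-rank-$2$ partial sums of the form $\sum_{i=1}^n g(X_i,\ldots,X_{i+p-1})$, which is proven in Section 5 and applied here with $g=f$ and $p=h$. The idea is to expand $f$ via the Hermite expansion \eqref{eq:Hermite_expansion} and isolate the leading (total-degree-two) terms. These are of two kinds: diagonal, $H_2(X_{i+j})$, and off-diagonal, $H_1(X_{i+j})H_1(X_{i+k})=X_{i+j}X_{i+k}$. For each pair $(j,k)$, the normalized partial sum of the corresponding second-order polynomial converges to a multiple of the \emph{same} Rosenblatt random variable $Z_{2,1-D/2}(1)$, jointly in $(j,k)$, because all these sums are driven by the same underlying quadratic functional of the Gaussian process. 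The higher-order terms $(k\geq 3)$ in the Hermite expansion contribute partial sums of variance at most $O(n^{2-3D}L(n)^3)$ when $3D<1$ and $O(n)$ when $3D\geq 1$; in either case this is $o(n^{2-2D}L(n)^2)$ for $D<1/2$, so such terms vanish under the chosen normalization.

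The prefactor $\sum_{j,k}\alpha_{j,k}$ with $(\alpha_{j,k}) = \Sigma_h^{-1}C\Sigma_h^{-1}$ emerges from translating the second-order Hermite expansion of $f$ with respect to $\N(0,E_h)$ into quantities intrinsic to the actual covariance $\Sigma_h$ of $(X_{i+1},\ldots,X_{i+h})$. This is entirely parallel to the role of $\Sigma_p^{-1}$ in the Hermite-rank-$1$ coefficient of Theorem \ref{th:lt-he}: the matrix $C$ collects the natural second-order moment coefficients $c_{j,k}$ of $f$ computed before, and sandwiching by $\Sigma_h^{-1}$ on both sides yields the coefficients of the Hermite-rank-$2$ polynomials adapted to $\Sigma_h$, which are what drive the limit. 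The constants $(2!C_2)^{1/2}$ and $C_2 = ((1-2D)(2-D))^{-1}$ are pinned down by the standard normalization of the Rosenblatt process, as in Taqqu's original result applied to $H_2(X_i)=X_i^2-1$.

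The main obstacle is establishing the multivariate non-central limit theorem in Section 5 that delivers the joint convergence of all second-order Hermite sums to the \emph{same} Rosenblatt random variable (with no independent copies appearing), which is what forces the final limit to be a scalar multiple of $Z_{2,1-D/2}(1)$ rather than a sum of independent Rosenblatts. A secondary technical point is tracking all constants so that the Hermite coefficients of $f$ aggregate cleanly into $\sum_{j,k}\alpha_{j,k}$, and verifying that the Rao-Blackwellized structure of $\bar\pi$ does not introduce additional cancellations beyond those already encoded in \eqref{eq: relations hermite coefficients}.
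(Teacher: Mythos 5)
Your proposal follows essentially the same route as the paper's proof (which proceeds via the general Hermite-rank-$2$ result, Theorem \ref{th:nclt-f}): reduction to the total-degree-two term of the Hermite expansion via Arcones' theorem, translation of the second-order coefficients into $\Sigma_h^{-1}C\Sigma_h^{-1}$ through the Cholesky factor, and joint convergence of all the quadratic partial sums to a single Rosenblatt variable. The step you correctly flag as the main obstacle --- that every pair $(j,k)$ yields the \emph{same} $Z_{2,1-D/2}(1)$ rather than independent copies --- is resolved in the paper by reorganizing the double sum $\sum_i\sum_{j,k}\alpha_{jk}X_{i+j-1}X_{i+k-1}$ into sample autocovariances $\hat{r}_n(l)$ at the finitely many lags $l=0,\ldots,p$ and citing their joint convergence to $(Z_{2,H}(1),\ldots,Z_{2,H}(1))$ (Section 4.4.1.3 of \cite{beran:feng:ghosh:kulik:2013}).
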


\begin{remark}
For $D>\frac{1}{2}$, the asymptotic distribution of $\hat{p}_n(\pi)$ is derived in \cite{keller:sinn:2005}, Theorem 7. In this case, it is Gaussian.
\end{remark}

\noindent
For small $h$ we calculate the matrix of coefficients $\left(\alpha_{l,k}\right)_{1\leq l,k\leq h}$ explicitly:

\begin{example}[The case $h=1$]
Since we are interested in increments with length $h=1$, we have to study ordinal patterns of length $p=2$. Regarding $\pi=(1,0)$ we derive the event $\left\{\Pi(\xi_0,\xi_1)=(1,0)\right\}=\left\{\xi_0\leq\xi_1\right\}=\left\{X_1\geq 0\right\}$ and therefore
\begin{align*}
c_{1,1}=\Ex\left[\left(X_1^2-1\right)1_{\left\{X_1\geq 0\right\}}\right]=\int_0^{\infty}\left(y_1^2-1\right)\varphi(y_1)\mathrm{d}y_1=0.
\end{align*}
So in the \textit{trivial} case (only one increment variable) we derive a degenerate limit distribution again.
\end{example}
For increments of length $h=2$, we used  Mathematica to calculate the Hermite coefficients.
\begin{example}[The case $h=2$]\label{ex: coeff}
First, we consider the pattern $\pi=(2,1,0)$ and the corresponding event $\left\{\Pi(\xi_0,\xi_1,\xi_2)=(2,1,0)\right\}=\left\{\xi_2\geq \xi_1 \geq \xi_0\right\}=\left\{X_1\geq 0, X_2\geq 0\right\}$. We know that $c_{i,j}=c_{i,j}^{\pi}$, $i,j=1,2$, and by \eqref{eq: relations hermite coefficients} that $c_{1,1}=c_{2,2}$ since $\mathcal{T}\circ\mathcal{S}(2,1,0)=(2,1,0)$.
We have
\begin{align*}
c_{1,1}=\Ex\left[\left(X_1^2-1\right)1_{\left\{X_1\geq 0,X_2\geq 0\right\}}\right]&=\int_0^{\infty}\int_0^{\infty}\left(y_1^2-1\right)\varphi_{\left(X_1,X_2\right)}(y_1,y_2)\mathrm{d}y_1\mathrm{d}y_2\\
&=\varphi^2(0)r(1)\sqrt{1-\left(r(1)\right)^2}
\end{align*}
and
\begin{align*}
c_{1,2}&=\Ex\left[\left(X_1X_2-\Ex\left(X_1 X_2\right)\right)1_{\left\{X_1\geq 0,X_2\geq 0\right\}}\right]\\
&=\int_0^{\infty}\int_0^{\infty}y_1 y_2\varphi_{\left(X_1,X_2\right)}(y_1,y_2)\mathrm{d}y_1\mathrm{d}y_2-r(1)\int_0^{\infty}\int_0^{\infty}\varphi_{\left(X_1,X_2\right)}(y_1,y_2)\mathrm{d}y_1\mathrm{d}y_2\\
&=\varphi^2(0)\sqrt{1-\left(r(1)\right)^2}.
\end{align*}
This yields
\begin{align*}
\sum_{i,j=1}^2 \alpha_{i,j}&=2\left(g_{1,2}+g_{2,2}\right)^2\left(c_{1,1}+c_{1,2}\right)\\
&=2\frac{c_{1,1}+c_{1,2}}{\left(1+r(1)\right)^2}\\
&=2\varphi^2(0)\sqrt{\frac{1-r(1)}{1+r(1)}}.
\end{align*}
For $\pi=(2,1,0)$ the left-hand side in \eqref{convergence improved estimator} converges in distribution to $2\varphi^2(0)\sqrt{\frac{1-r(1)}{1+r(1)}}Z_{2,H}(1)$.\par
Consider the pattern $\pi=(2,0,1)$ and the corresponding event $\left\{\Pi(\xi_0,\xi_1,\xi_2)=(2,0,1)\right\}=\left\{\xi_1 \leq \xi_0 \leq \xi_2\right\}=\left\{X_1\leq 0, X_1+X_2\geq 0\right\}$. It holds that
\begin{align*}
c_{1,1}^{\pi}=\Ex\left[\left(X_1^2-1\right)1_{\left\{X_1\leq 0,X_2\geq -X_1\right\}}\right]&=\int_{-\infty}^0 \int_{-y_1}^{\infty}\left(y_1^2-1\right)\varphi_{\left(X_1,X_2\right)}(y_1,y_2)\mathrm{d}y_2\mathrm{d}y_1\\
&=-\varphi^2(0)\frac{\sqrt{1-\left(r(1)\right)^2}}{2} 
\end{align*}
and
\begin{align*}
c_{1,2}^{\pi}&=\Ex\left[\left(X_1X_2-\Ex\left(X_1 X_2\right)\right)1_{\left\{X_1\leq 0,X_2\geq -X_1\right\}}\right]\\
&=\int_{-\infty}^0 \int_{-y_1}^{\infty}y_1 y_2\varphi_{\left(X_1,X_2\right)}(y_1,y_2)\mathrm{d}y_2\mathrm{d}y_1-r(1)\int_{-\infty}^0 \int_{-y_1}^{\infty}\varphi_{\left(X_1,X_2\right)}(y_1,y_2)\mathrm{d}y_2\mathrm{d}y_1\\
&=-\varphi^2(0)\frac{\sqrt{1-\left(r(1)\right)^2}}{2}.
\end{align*}
Since the reversion group of this pattern has four elements we also need to calculate
\begin{align*}
c_{2,2}^{\pi}=\Ex\left[\left(X_2^2-1\right)1_{\left\{X_1\leq 0,X_2\geq -X_1\right\}}\right]&=\int_{-\infty}^0 \int_{-y_1}^{\infty}\left(y_2^2-1\right)\varphi_{\left(X_1,X_2\right)}(y_1,y_2)\mathrm{d}y_2\mathrm{d}y_1\\
&=-\varphi^2(0)\frac{\sqrt{1-\left(r(1)\right)^2}(2r(1)-1)}{2}.
\end{align*}

Altogether we arrive at
\begin{align*}
\sum_{i,j=1}^2 \alpha_{i,j}
&=\frac{1}{\left(1+r(1)\right)^2}\left(c_{1,1}+2c_{1,2}+c_{2,2}\right)\\
&=\frac{1}{\left(1+r(1)\right)^2}\left(c_{1,1}^{\pi}+2c_{1,2}^{\pi}+c_{2,2}^{\pi}\right)\\
&=-\varphi^2(0)\frac{\sqrt{1-\left(r(1)\right)^2}}{\left(1+r(1)\right)^2}\left(r(1)+1\right)\\
&=-\varphi^2(0)\sqrt{\frac{1-r(1)}{1+r(1)}}.
\end{align*}
For $\pi=(2,0,1)$ the left-hand side in \eqref{convergence improved estimator} converges in distribution to $-\varphi^2(0)\sqrt{\frac{1-r(1)}{1+r(1)}}Z_{2,H}(1)$.
\end{example}\par
\begin{remark} \label{rem:specialcase}
The reader might wonder which limit theorems one can derive in the special case that it is not only the increment process which is stationary but the time series itself.
We have to determine the Hermite rank of the estimator $\hat{q}_n(\pi)$ in this setting and we obtain that for any $\pi\in S_h$ the Hermite rank of the function $f:\mathbb{R}^{h+1} \longrightarrow \mathbb{R}$, defined by
\begin{equation*}
f(x_0, x_1, \ldots, x_h):=1_{\{\Pi(x_0,\ldots,x_{h})=\pi\}}-\Px(\Pi(x_0,\ldots,x_{h})=\pi),
\end{equation*}
is equal to 1 (for details, see Section 5).\par
We get the following asymptotic result concerning the ordinal pattern probability estimator $\hat{q}_n(\pi)$ in this modified setting:
\[
  n^{D/2} L^{-1/2}(n) \left(\hat{q}_n(\pi)
  -\Px\left(\Pi(X_0,\ldots,X_h)=\pi\right)\right) \xrightarrow{D} \delta_0,
\] 
where $\delta_0$ denotes the Dirac measure in $0$. In this special case, the limit distribution for $\hat{q}_n(\pi)$ is  trivial.
\end{remark}
However, taking the classical rate of convergence $n^{1/2}$, we will get a non-trivial Gaussian central limit theorem as explained in section 2.1.

\section{Estimation of the Hurst parameter}\noindent
\cite{sinn:keller:2011} derive an estimator for the Hurst parameter based on the improved estimator for ordinal pattern probabilites $\hat{p}_n(\pi)$. They show asymptotic normality of this estimator in the case $H<\frac{3}{4}$.
In order to obtain the asymptotic distribution for $H>\frac{3}{4}$, we briefly describe the setting that was developed in that article. The idea is to determine the probability of changes in the ``up-and-down" behaviour of the process $\XI$. Since we need to use \textit{orthant probabilites} of the normal distribution, we restrict ourselves to the case $h=2$ here.
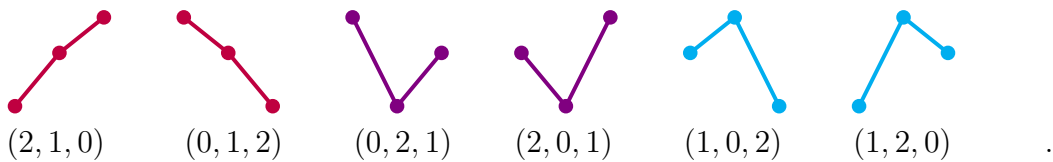
\begin{figure}[h]
\begin{center}
\begin{tikzpicture}
[scale =1]
\begin{axis}[ height =3cm,
  width=3cm,
axis x line=none,
axis y line=none
]
\addplot [mark=*,purple,line width =1.5pt] coordinates {
(0,0.1)(1,0.4)(2,0.6)};
\end{axis}
\end{tikzpicture}\quad\quad
\begin{tikzpicture}
[scale =1]
\begin{axis}[ height =3cm,
  width=3cm,
axis x line=none,
axis y line=none
]
\addplot [mark=*,purple,line width =1.5pt] coordinates {
(0,0.6)(1,0.4)(2,0.1)};
\end{axis}
\end{tikzpicture}\quad\quad
\begin{tikzpicture}
[scale =1]
\begin{axis}[ height =3cm,
  width=3cm,
axis x line=none,
axis y line=none
]
\addplot [mark=*,violet,line width =1.5pt] coordinates {
(0,0.6)(1,0.1)(2,0.4)};
\end{axis}
\end{tikzpicture}\quad\quad
\begin{tikzpicture}
[scale =1]
\begin{axis}[ height =3cm,
  width=3cm,
axis x line=none,
axis y line=none
]
\addplot [mark=*,violet,line width =1.5pt] coordinates {
(0,0.4)(1,0.1)(2,0.6)};
\end{axis}
\end{tikzpicture}\quad\quad
\begin{tikzpicture}
[scale =1]
\begin{axis}[ height =3cm,
  width=3cm,
axis x line=none,
axis y line=none
]
\addplot [mark=*,cyan,line width =1.5pt] coordinates {
(0,0.4)(1,0.6)(2,0.1)};
\end{axis}
\end{tikzpicture}\quad\quad
\begin{tikzpicture}
[scale =1]
\begin{axis}[ height =3cm,
  width=3cm,
axis x line=none,
axis y line=none
]
\addplot [mark=*,cyan,line width =1.5pt] coordinates {
(0,0.1)(1,0.6)(2,0.4)};
\end{axis}
\end{tikzpicture}\newline
$(2,1,0)$\quad\quad\thickspace\thickspace$(0,1,2)$\quad\quad\thickspace$(0,2,1)$\quad\quad$(2,0,1)$\quad\quad\thickspace$(1,0,2)$\quad\thickspace\quad$(1,2,0)$\quad\quad\quad.
\caption{Ordinal patterns for $h=2$.}
\label{fig: Bild 10}~
\end{center}
\end{figure}~\newline
To capture this mathematically, we define 
\begin{align*}
W(i):=1_{\left\{\Pi(\xi_i,...,\xi_{i+2})\in\bar{\pi}\right\}}
\end{align*}
with $\bar{\pi}=\{(2,0,1),(1,0,2),(0,2,1),(1,2,0)\}$.

Therefore, we obtain
\begin{align*}
c:=\Px(W(i)=1)=2\Px\left(X_{i+1}\geq 0,X_{i+2}\leq 0\right)=\frac{1}{2}-\frac{1}{\pi}\arcsin(r(1)),
\end{align*}
where $r$ is the covariance function of the stationary and long-range dependent increment process $\X=(X_k)_{k\geq 1}$ of $\XI$ as defined above;  see \cite{kotz:2004}, p.92. Since $r$ depends on the long-range dependence parameter $D$, which we can express as $D=2-2H$ in terms of the Hurst parameter, we will write $c=c(H)$ in the following.

In order to estimate this probability, we choose the relative frequency as an estimator:
\begin{align*}
\hat{c}_n:=\frac{1}{n}\sum_{i=0}^{n-1}W(i)=4\hat{p}_{n}(\pi),
\end{align*}
with $\pi\in\{(2,0,1),(1,0,2),(0,2,1),(1,2,0)\}$.
We want to estimate the Hurst parameter $H$ in the case that $\X$ is fractional Gaussian noise and hence $\XI$ is fractional Brownian motion. The correlation function of fractional Gaussian noise is given by
\begin{align*}
r_H(k)=\frac{1}{2}\left[(k+1)^{2H}-2k^{2H}+(k-1)^{2H}\right]
\end{align*}
such that $r_H(1)=2^{2H-1}-1$.
Therefore, we obtain
\begin{align*}
c(H)=1-\frac{2}{\pi}\arcsin(2^{H-1}),~H\in(0,1),
\end{align*}
since $\arcsin(x)=2\arcsin\left(\sqrt{\frac{1+x}{2}}\right)-\frac{\pi}{2}$ for $x\in[-1,1]$.
The probability of changes in the up-and-down-behaviour gets smaller if the Hurst parameter gets larger, as expected  intuitively due to the persistent behaviour of long-range dependent time series.
We calculate the inverse of $c$ by
\begin{align*}
g(x):=\max\left\{0,\log_2\left(\cos\left(\frac{\pi x}{2}\right)\right)+1\right\}, x\in\left[0,1\right],
\end{align*}
so that $H=g(c(H))$ is satisfied

The \textit{Zero-Crossing} estimator of the Hurst Parameter $H$ is then defined by
\begin{align*}
\hat{H}_n:=g(\hat{c}_n).
\end{align*}

In \cite{sinn:keller:2011}, Corollary 11, it is shown that $\hat{H}_n$ is a strongly consistent and asymptotically unbiased estimator of the Hurst Parameter, as well as it is asymptotically normal if $H<\frac{3}{4}$. Using Theorem \ref{th:lt-ie} we can complement their result by the following theorem.
\begin{theorem}\label{th:lt-hurst}
If $H>\frac{3}{4}$, 
\begin{align*}
n^{2-2H}\frac{\sqrt{4H-3}}{\sqrt{H}(2H-1)}\left(\hat{H}_n-H\right)\xrightarrow{D} Z_{2, H}(1)\left( \frac{2\pi}{2\log2}\tan\left(\frac{\pi c(H)}{2}\right)
\varphi^2(0)\sqrt{2^{2-2H}-1}\right).
\end{align*}
\end{theorem}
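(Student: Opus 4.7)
The plan is to reduce the theorem to a direct application of Theorem \ref{th:lt-ie} combined with the delta method applied to the smooth function $g$. The setup already gives us $\hat{c}_n = 4\hat{p}_n(\pi)$ and $c(H) = 4p(\pi)$ for any $\pi$ in the reversion group $\bar{\pi} = \{(2,0,1),(1,0,2),(0,2,1),(1,2,0)\}$, and the restriction $H > 3/4$ translates to $D = 2-2H \in (0,1/2)$, which is exactly the regime in which Theorem \ref{th:lt-ie} applies. So the first step is to write
\begin{equation*}
\hat{H}_n - H = g(\hat{c}_n) - g(c(H)) = g'(c(H))(\hat{c}_n - c(H)) + o_P(\hat{c}_n - c(H))
\end{equation*}
via a first-order Taylor expansion of $g$ at $c(H)$, which is legitimate because Theorem \ref{th:consistent} guarantees $\hat{c}_n \to c(H)$ almost surely and $g$ is smooth on a neighbourhood of $c(H) \in (0,1)$. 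A direct differentiation gives $g'(x) = -\frac{\pi}{2\ln 2}\tan(\pi x/2)$.

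Next I would plug in the explicit constants. Since the increment process of fractional Brownian motion is fractional Gaussian noise with covariance $r_H(k) = \tfrac{1}{2}[(k+1)^{2H} - 2k^{2H} + (k-1)^{2H}]$, a Taylor expansion at infinity yields $r_H(k) \sim H(2H-1) k^{-D}$, so the slowly varying function in Theorem \ref{th:lt-ie} converges to the constant $L(n) \to H(2H-1)$. With $D = 2-2H$ one computes
\begin{equation*}
C_2 = \frac{1}{(1-2D)(2-D)} = \frac{1}{(4H-3)\cdot 2H},
\end{equation*}
so $(2C_2)^{1/2} L(n) \sim H(2H-1)/\sqrt{H(4H-3)}$ and the deterministic rate from Theorem \ref{th:lt-ie} becomes exactly $n^{2-2H}\sqrt{4H-3}/(\sqrt{H}(2H-1))$. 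For the coefficient in the limit, Example \ref{ex: coeff} with $\pi = (2,0,1)$ gives
\begin{equation*}
\sum_{j,k=1}^{2}\alpha_{j,k} = -\varphi^2(0)\sqrt{\tfrac{1-r(1)}{1+r(1)}} = -\varphi^2(0)\sqrt{2^{2-2H}-1},
\end{equation*}
using $r_H(1) = 2^{2H-1}-1$ and the identity $(1-r_H(1))/(1+r_H(1)) = 2^{2-2H}-1$.

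Combining the delta-method formula with Theorem \ref{th:lt-ie} applied to $\hat{p}_n(\pi) = \hat{c}_n/4$, and multiplying by the factor $4$ coming from $\hat{c}_n - c(H) = 4(\hat{p}_n(\pi) - p(\pi))$ and by $g'(c(H))$ (whose negative sign cancels the negative sign of $\sum\alpha_{j,k}$), yields the claimed limit constant $\frac{\pi}{\ln 2}\tan(\pi c(H)/2)\,\varphi^2(0)\sqrt{2^{2-2H}-1}$ in front of $Z_{2,H}(1)$. The only delicate point is the validity of the delta method in the non-Gaussian setting: since $\hat{c}_n$ converges in distribution at rate $n^{-(2-2H)}$ to a non-degenerate (Rosenblatt) limit and also converges almost surely to $c(H)$, the Taylor remainder is $O_P((\hat{c}_n - c(H))^2) = o_P(n^{-(2-2H)})$, so Slutsky's theorem and the continuous mapping theorem give the joint convergence of the normalized linear term to the stated Rosenblatt-distributed limit. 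This is the main thing to check carefully; everything else is bookkeeping of the constants derived above.
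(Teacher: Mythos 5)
Your proposal is correct and follows essentially the same route as the paper's own proof: apply Theorem \ref{th:lt-ie} to $\hat{c}_n=4\hat{p}_n(\pi)$, take the coefficient $\sum_{j,k}\alpha_{j,k}=-\varphi^2(0)\sqrt{2^{2-2H}-1}$ from Example \ref{ex: coeff}, identify $L(n)\to H(2H-1)$ and $2C_2=\left(H(4H-3)\right)^{-1}$, and finish with the delta method using $g'(c(H))=-\frac{\pi}{2\log 2}\tan\left(\frac{\pi c(H)}{2}\right)$. The only wrinkle is bookkeeping: the factor $4$ multiplied by $\left|g'(c(H))\right|$ gives $\frac{2\pi}{\log 2}$ rather than the $\frac{2\pi}{2\log 2}$ you (and the theorem statement) report, but this factor-of-two discrepancy is present in the paper's own derivation as well and is not introduced by your argument.
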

\begin{proof}
Since $\hat{c}_n=4\hat{p}_n$, it follows by Theorem \ref{th:lt-ie} and Example \ref{ex: coeff} that
\begin{align*}
n^{D}(2C_2)^{-\frac{1}{2}}L^{-1}(n)\left(\hat{c}_n-c(H)\right)\xrightarrow{D} Z_{2, H}(1)\left(4
\sum\limits_{k=1}^{2}\sum\limits_{l=1}^{2}\alpha_{l, k}\right),
\end{align*}
where $\left(\alpha_{l,k}\right)_{1\leq l,k\leq h}=\Sigma_{h+1}^{-1}C\Sigma_{h+1}^{-1}$ with 
\begin{align*}
C=\Ex\left((X_1,...,X_h)\frac{1}{\#\bar{\pi}}\left[1_{\left\{\tilde{\Pi}(X_1,...,X_h)\in\bar{\pi}\right\}}-\Px(\tilde{\Pi}(X_1,...,X_h)\in\bar{\pi})\right](X_1,...,X_h)^t\right)
\end{align*}
for $\bar{\pi}=\{(2,0,1),(1,0,2),(0,2,1),(2,0,1)\}$. Therefore, and according to Example \ref{ex: coeff}, we arrive at
\begin{align*}
\sum\limits_{k=1}^{2}\sum\limits_{l=1}^{2}\alpha_{l, k}=-\varphi^2(0)\sqrt{\frac{2-2^{2H-1}}{2^{2H-1}}}=-\varphi^2(0)\sqrt{2^{2-2H}-1}.
\end{align*}
We also know that $2C_2=2\left((1-2D)(2-D)\right)^{-1}=\left(H(4H-3)\right)^{-1}$ and since 
\begin{align*}
r_H(k)\sim H(2H-1)k^{2H-2},
\end{align*}
we get $L(n)\sim H(2H-1)$ (see \cite{beran:feng:ghosh:kulik:2013}, p. 34) with $f(k)\sim g(k)$ meaning that $\lim_{k\rightarrow\infty}\frac{f(k)}{g(k)}=1$.
All in all, it follows that
\begin{align*}
n^{2-2H}\frac{\sqrt{4H-3}}{\sqrt{H}(2H-1)}\left(   \hat{c}_n-c(H)     \right)\xrightarrow{D} Z_{2, H}(1)\left(-4
\varphi^2(0)\sqrt{2^{2-2H}-1}\right).
\end{align*}
We have $H=g(c(H))$ and $\hat{H}_n=g(\hat{c}_n)$. Due to $c(H)\in(0,\frac{2}{3})$ for $H\in (0,1)$,  $g'(c(H))=-\frac{\pi}{2\log2}\tan\left(\frac{\pi c(H)}{2}\right)$ exists and does not equal  zero for  $H\in (0,1)$. Applying Theorem 3 in \cite{vandervaart:2000} we arrive at the above limit.
\end{proof} 
\section{Proofs}
In this section the proofs of the results derived in Section 3 are presented. We are able to give these results in a more general way than needed in the context of ordinal patterns and therefore consider a larger class of functions.  
In the following we consider the asymptotic behaviour of the partial sums
\begin{equation}
  \sum_{i=1}^n \left \{ f(X_i,\ldots,X_{i+p-1}) -\Ex f(X_1,\ldots,X_{p}) \right\},\label{eq:p-sum2}
\end{equation}
where $f:\R^{p}\rightarrow \R$ satisfies $\Ex(f(X_1,\ldots,X_{p}))^2<\infty$.
A first result on the asymptotic behaviour of these partial sums, which includes the statement of Theorem \ref{th:consistent}, is given by the following proposition that can be derived from Birkhoff's ergodic theorem; see also
\cite{sinn:keller:2011}. 
\begin{proposition}\label{prop:consistency} 
Suppose that $(X_i)_{i\geq 1}$ is a stationary ergodic process, and that $f:\R^p\rightarrow \R$ is a measurable function such that $f(X_1,\ldots,X_p)\in L_1$. Then,
\[
  \frac{1}{n} \sum_{i=1}^n f(X_i,\ldots,X_{i+p-1}) \longrightarrow \Ex f(X_1,\ldots,X_p)
\]
almost surely, as $n\rightarrow \infty$.
\end{proposition}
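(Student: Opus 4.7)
The plan is to reduce this to a direct application of Birkhoff's pointwise ergodic theorem. Set $Y_i := f(X_i, X_{i+1}, \ldots, X_{i+p-1})$ for $i \geq 1$. The partial sum in question is then $\frac{1}{n}\sum_{i=1}^n Y_i$, so it suffices to show that $(Y_i)_{i \geq 1}$ is a stationary ergodic sequence with $Y_1 \in L_1$, and then invoke Birkhoff.

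First I would realize $(Y_i)$ as a factor of $(X_i)$ on a canonical sequence space. Let $(\Omega, \mathcal{F}, \mu)$ be the path space $\R^\NN$ equipped with the law of $(X_i)$, and let $T:\Omega \to \Omega$ be the left shift $T(x_1, x_2, \ldots) = (x_2, x_3, \ldots)$. By assumption $T$ preserves $\mu$ and is ergodic. Define $g:\Omega \to \R$ by $g(x_1, x_2, \ldots) := f(x_1, \ldots, x_p)$, which is measurable, and note that $Y_i = g \circ T^{i-1}$. This exhibits $(Y_i)$ as the orbit of $g$ under a measure-preserving ergodic transformation, so $(Y_i)$ is stationary and ergodic. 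Integrability is immediate from the hypothesis: $\Ex |Y_1| = \Ex |f(X_1, \ldots, X_p)| < \infty$.

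Now Birkhoff's ergodic theorem applied to the $T$-invariant ergodic system $(\Omega, \mathcal{F}, \mu, T)$ and the function $g \in L_1(\mu)$ yields
\[
\frac{1}{n}\sum_{i=1}^n g \circ T^{i-1} \longrightarrow \Ex g = \Ex f(X_1, \ldots, X_p) \quad \text{a.s.},
\]
which is precisely the desired convergence. Taking $f$ to be the indicator function of the ordinal-pattern event $\{\tilde{\Pi}(x_1, \ldots, x_h) = \pi\}$ recovers Theorem \ref{th:consistent} as a special case.

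There is no real obstacle here; the only subtlety is confirming that passing from the ergodic base process $(X_i)$ to the sliding-window process $(Y_i)$ preserves ergodicity. This is standard: any factor map of an ergodic system is ergodic, since invariant events for the induced shift on $(Y_i)$ pull back to invariant events for $T$ under the coordinate map $\omega \mapsto (g(T^{i-1}\omega))_{i \geq 1}$, and these must have measure $0$ or $1$ by ergodicity of $T$.
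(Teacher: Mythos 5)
Your proposal is correct and follows essentially the same route as the paper: realize the process on the canonical sequence space, let $g$ be the function depending only on the first $p$ coordinates, and apply Birkhoff's ergodic theorem to $g$ under the shift. The closing remark about factor maps is fine but not needed, since you apply Birkhoff directly to $g$ on the original ergodic system rather than to the derived sequence $(Y_i)$.
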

\begin{proof}
Ergodicity of the process $(X_i)_{i\geq 1}$ means that the shift operator 
\[
  S:\R^\bbn \rightarrow \R^\bbn
\]
defined by $(x_i)_{i\geq 1}\mapsto (x_{i+1})_{i\geq 1}$, is an ergodic transformation on the sequence space $\R^\bbn$, equipped with the product $\sigma$-field and the probability measure 
$\mu={\mathcal L}((X_i)_{i\geq 1})$. Thus, by Birkhoff's ergodic theorem, we obtain for any integrable function $g:\R^\bbn\rightarrow \R$
\[
 \lim_{n\rightarrow \infty} \frac{1}{n} \sum_{k=1}^n  g(S^k(\omega))=\int g(\omega)d\mu(\omega)
\]
almost surely.  We now apply the ergodic theorem to the function $g:\R^\bbn\rightarrow \R$, defined by
\[
  g((x_i)_{i\geq 1}):= f(x_1,\ldots,x_p).
\]
With this choice of $g$, we obtain $g(S^k((x_i)_{i\geq 1}))=f(x_{k+1},\ldots,x_{k+p})$ and 
$\int g(\omega) d\mu(\omega)=\Ex g((X_i)_{i\geq 1})=\Ex f(X_1,\ldots,X_p)$, and thus
\[
 \frac{1}{n} \sum_{k=1}^n  f(x_{k+1},\ldots,x_{k+p}) \rightarrow \Ex f(X_1,\ldots,X_p)
\]
for $\mu$-almost every sequence $(x_i)_{i\geq 1}$.  Thus, by definition of $\mu$, we find
\[
 \frac{1}{n} \sum_{k=1}^n  f(X_{k+1},\ldots,X_{k+p}) \rightarrow \Ex f(X_1,\ldots,X_p),
\]
almost surely.
\end{proof}

\begin{remark}
A stationary Gaussian process $(X_k)_{k\geq 0}$ with autocovariance function 
 $r(k)=\Cov(X_0,X_k)$ such that $r(k)\xrightarrow{} 0$ if $k\xrightarrow{}\infty$ is mixing and hence ergodic; see \cite{samorodnitsky:2007}, pp. 43, 46. Thus, we may apply the above results to such Gaussian processes. 
\end{remark}

In general, in order to derive the limit distributions of partial sums given in \eqref{eq:p-sum2},  a careful analysis of the Hermite rank of the considered function $f$ is crucial. First, we present two lemmas that are helpful tools to determine the Hermite rank of a function $f$ and later on we give the proofs of Lemma \ref{lemm: estimatorHR1} and Lemma \ref{lemm: estimatorHR2}, in which we deal with the heuristic estimator of ordinal pattern probabilities and the improved estimator based on the Rao-Blackwellization, respectively. Furthermore, we give the justification for the Hermite rank of the estimator considered in Remark \ref{rem:specialcase}.\par
It is well known that for $\Sigma_{h+1}=AA^t$ we have $m(f,\Sigma_{h+1})=m(f\circ A,E_{h+1})$ and that $m(f,\Sigma_{h+1})\neq m(f,E_{h+1})$ in general; see   \cite{beran:feng:ghosh:kulik:2013}, Lemma 3.7. The last fact is disadvantageous since determining $m(f\circ A,E_{h+1})$ is usually much more complicated than determining $m(f,E_{h+1})$. 
However, it is possible to show that under a mild additional assumption $m(f\circ A,E_{h+1})$ is bounded by $m(f,E_{h+1})$; see Lemma \ref{lemma_1}.

\begin{lemma}\label{lemma:L2-convergence}
Let $(f_n)_{n\in\bbn}$ be a sequence of measurable functions in  $L^2(\N(0,E_{h+1}))$,  and let $f$ be another function in this space such that $f_n\to f$ in the metric of this space. Furthermore, let $\Sigma_{h+1}$ be a positive definite matrix such that $(\Sigma_{h+1}^{-1}-E_{h+1})$ is positive semidefinite. In this case, $(f_n)_{n\in\bbn}$ converges to $f$ in  $L^2(\N(0,\Sigma_{h+1}))$.
\end{lemma}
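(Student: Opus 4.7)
The plan is to reduce the $L^2(\mathcal{N}(0,\Sigma_{h+1}))$ convergence to the $L^2(\mathcal{N}(0,E_{h+1}))$ convergence by a Radon--Nikodym derivative comparison, exploiting the positive semidefiniteness hypothesis to bound that derivative uniformly.

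First, I would write out the densities of the two Gaussian measures explicitly and form their ratio:
\[
\frac{d\mathcal{N}(0,\Sigma_{h+1})}{d\mathcal{N}(0,E_{h+1})}(x) = \frac{1}{\det(\Sigma_{h+1})^{1/2}}\exp\!\left(-\tfrac{1}{2}\, x^t(\Sigma_{h+1}^{-1} - E_{h+1})\,x\right).
\]
The key step is to observe that, by assumption, $\Sigma_{h+1}^{-1}-E_{h+1}$ is positive semidefinite, so $x^t(\Sigma_{h+1}^{-1}-E_{h+1})x \geq 0$ for every $x\in\mathbb{R}^{h+1}$, and hence the exponential factor is bounded above by $1$. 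Consequently the Radon--Nikodym derivative is uniformly bounded by the constant $C := \det(\Sigma_{h+1})^{-1/2}$.

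Then I would simply apply this pointwise bound inside the $L^2$-norm. For each $n$,
\[
\int |f_n - f|^2 \, d\mathcal{N}(0,\Sigma_{h+1}) = \int |f_n-f|^2 \, \frac{d\mathcal{N}(0,\Sigma_{h+1})}{d\mathcal{N}(0,E_{h+1})}\, d\mathcal{N}(0,E_{h+1}) \leq C \int |f_n - f|^2 \, d\mathcal{N}(0,E_{h+1}),
\]
and the right-hand side tends to zero by hypothesis. This yields the claim.

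The argument is essentially one observation, and no step is a genuine obstacle: the only subtlety is recognizing that the positive semidefiniteness condition on $\Sigma_{h+1}^{-1}-E_{h+1}$ is exactly what is needed to control the quadratic form in the exponent, and thus to dominate the target measure by a constant multiple of the standard normal measure. Positive definiteness of $\Sigma_{h+1}$ is tacitly used to ensure that $\det(\Sigma_{h+1})>0$, so that the constant $C$ is finite.
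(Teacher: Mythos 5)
Your proof is correct and follows essentially the same route as the paper: compute the Radon--Nikodym derivative of $\N(0,\Sigma_{h+1})$ with respect to $\N(0,E_{h+1})$, use positive semidefiniteness of $\Sigma_{h+1}^{-1}-E_{h+1}$ to bound it by a constant, and conclude by domination. (Your density ratio with $\det(\Sigma_{h+1})^{-1/2}$ is in fact the correct normalization; the paper's displayed constant omits the square root, a harmless typo.)
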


\begin{proof}
Let $h$ be the Radon-Nikodym density of $\N(0,\Sigma_{h+1})$ with respect to $\N(0,E_{h+1})$ such that we have
\[
  \int_{\bbr^d}\abs{f_n-f}^2 \ d\N(0,\Sigma_{h+1}) = \int_{\bbr^d} \abs{f_n-f}^2  h \ d\N(0, E_{h+1}).
\]
Hence, proving Lemma \ref{lemma:L2-convergence} boils down to the boundedness of $h$ which is obtained in the above setting by an elementary calculation:
let $\varphi$ and $\widetilde{\varphi}$ denote the density of $\N(0, E_{h+1})$ and $\N(0,\Sigma_{h+1})$, respectively. Then,
\begin{align*}
  h(x)=\frac{\widetilde{\varphi(x)}}{\varphi(x)}&= \frac{\det(E_{h+1})}{\det(\Sigma_{h+1})} \exp \left( -\frac{1}{2} (x'\Sigma_{h+1}^{-1}x - x'E_{h+1} x)\right) \\
	                                              &= \frac{1}{\det(\Sigma_{h+1})} \exp \left( -\frac{1}{2} x'(\Sigma_{h+1}^{-1}-E_{h+1})x \right). 
\end{align*}
\end{proof}

Given the above result, we arrive at the following upper bound for 
 $m(f\circ A,E_{h+1})$:
\begin{lemma}\label{lemma_1}
Let $f:\RR^p\rightarrow\RR$ be square-integrable with respect to $\N(0,E_{h+1})$ and let $\Sigma_{h+1}=AA^t$ be a $(h+1)\times (h+1)$ positive definite covariance matrix such that $(\Sigma_{h+1}^{-1}-E_{h+1})$ is positive semidefinite. Then,
\begin{equation*}
m(f\circ A,E_{h+1})\leq m(f,E_{h+1}).
\end{equation*}
\end{lemma}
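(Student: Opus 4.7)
The plan is to combine the Hermite expansion of $f$ with respect to $\N(0, E_{h+1})$ with Lemma \ref{lemma:L2-convergence}. I set $m := m(f, E_{h+1})$ and $c_1 := \Ex f(U)$ for $U \sim \N(0, E_{h+1})$; by definition of the Hermite rank, there exists a multi-index $l_0$ with $|l_0| = m$ and $J_{l_0} = \Ex[f(U) H_{l_0}(U)] \neq 0$. Writing $l! := l_1!\cdots l_{h+1}!$ for short, equation \eqref{eq:Hermite_expansion} reads
\[
f - c_1 = \sum_{k \geq m}\ \sum_{|l| = k} \frac{J_l}{l!}\, H_l
\]
with convergence in $L^2(\N(0, E_{h+1}))$. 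Since $\Sigma_{h+1}^{-1} - E_{h+1}$ is positive semidefinite, Lemma \ref{lemma:L2-convergence} promotes this to convergence in $L^2(\N(0, \Sigma_{h+1}))$ as well.

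Because $V = AU \sim \N(0, \Sigma_{h+1})$ whenever $U \sim \N(0, E_{h+1})$, the pullback $g \mapsto g \circ A$ is an $L^2$-isometry from $L^2(\N(0, \Sigma_{h+1}))$ onto $L^2(\N(0, E_{h+1}))$. Applying it to the identity above, I realize $f \circ A - c_1$ as the $L^2(\N(0, E_{h+1}))$-limit of the partial sums
\[
S_N(x) := \sum_{m \leq |l| \leq N} \frac{J_l}{l!}\, H_l(Ax),
\]
in which each summand $H_l \circ A$ is a polynomial of degree exactly $|l|$ in $x$.

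Next I would read off the $E_{h+1}$-Hermite coefficients $\tilde J_{l'} := \Ex_U[(f \circ A)(U) H_{l'}(U)]$ of $f \circ A$. Exchanging sum and inner product (legitimate by $L^2$-convergence and $H_{l'} \in L^2(\N(0, E_{h+1}))$) yields
\[
\tilde J_{l'} = \sum_{|l| \geq m} \frac{J_l}{l!}\, \Ex_U\!\left[ H_l(AU)\, H_{l'}(U) \right].
\]
Since $H_l \circ A$ is a polynomial of degree $|l|$, its $E_{h+1}$-Hermite decomposition only involves multi-indices of total degree at most $|l|$; hence to produce a non-zero Hermite coefficient of $f \circ A - c_2$ (with $c_2 := \Ex f(AU)$) at some degree $\leq m$ it is enough to focus on the leading block $\sum_{|l|=m} \frac{J_l}{l!}(Ax)^l$, which is a non-trivial homogeneous polynomial of degree $m$ in $x$ by invertibility of $A$ and the non-vanishing of the family $(J_l)_{|l|=m}$.

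The main obstacle is to ensure that this leading top-degree contribution is not cancelled by the lower-degree pieces that the tail indices $|l|>m$ contribute to the same degree-$m$ Hermite slot. I would handle this by working at the level of the degree-$m$ homogeneous part in the Hermite basis, where only the $|l|=m$ summands can produce a non-zero contribution, and conclude that some $\tilde J_{l'_0}$ with $|l'_0| \leq m$ is non-zero, which gives $m(f \circ A, E_{h+1}) \leq m$ and closes the argument.
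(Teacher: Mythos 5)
Your argument follows the paper's own route step for step: expand $f$ in Hermite polynomials with respect to $\N(0,E_{h+1})$, invoke Lemma \ref{lemma:L2-convergence} to justify substituting $A\U$ into that expansion, and observe that each $H_{l_1,\ldots,l_{h+1}}\circ A$ is a polynomial of degree $l_1+\ldots+l_{h+1}$ and hence a linear combination of Hermite polynomials of degree at most $l_1+\ldots+l_{h+1}$. You also correctly isolate the one point on which everything hinges, namely whether the degree-$m$ Hermite coefficients produced by the block $|l|=m$ can be cancelled by the degree-$m$ components that the terms with $|l|>m$ also generate. But your proposed resolution of this point fails: it is not true that ``only the $|l|=m$ summands can produce a non-zero contribution'' to the degree-$m$ slot. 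Already in one dimension $H_3(ax)=a^3H_3(x)+3a(a^2-1)H_1(x)$, so for $a^2\neq 1$ a rank-$3$ term of $f$ feeds a non-zero amount into the rank-$1$ coefficient of $f\circ A$; passing to homogeneous parts does not help, because the sub-leading monomials of $H_l$ become, after composition with the invertible map $A$, non-trivial homogeneous polynomials of exactly those lower degrees.

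Moreover, the cancellation can genuinely occur under the stated hypotheses, so this gap is not merely presentational. Take $h+1=1$, $f=H_1+\tfrac23 H_3$, so that $m(f,E_1)=1$, and $\Sigma=\tfrac12$, $A=1/\sqrt2$; then $\Sigma^{-1}-E_1=1$ is positive semidefinite, yet
\begin{equation*}
f\!\left(x/\sqrt2\right)=\tfrac23\tfrac{x^3}{2\sqrt2}-\tfrac{x}{\sqrt2}=\tfrac{1}{3\sqrt2}\left(x^3-3x\right)=\tfrac{1}{3\sqrt2}H_3(x),
\end{equation*}
whence $m(f\circ A,E_1)=3>1=m(f,E_1)$. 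Note that the paper's own proof has the same soft spot: it passes from \eqref{eq:proof_2} to an expansion ``starting at $m_3$ with $m_3\leq m_1$'' without addressing cancellation, and the positive semidefiniteness hypothesis enters only through the $L^2$-convergence step, not through any control of the coefficients. So you have in fact put your finger on a real defect in the statement rather than merely failed to reproduce its proof; for the scale-invariant ordinal-pattern indicators to which the lemma is applied, the non-vanishing of the low-order Hermite coefficients with respect to the correlated vector has to be verified directly (as the paper's examples in effect do) rather than deduced from this general inequality.
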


\begin{remark}
Note that, for $\rho\neq 0$,
\begin{align*}
x^t\left(\rho\Sigma_{h+1}^{-1}-E_{h+1}\right)x >0 \ \Leftrightarrow  \ \frac{x^t\Sigma_{h+1}^{-1}x}{x^tx}>\frac{1}{\rho}.
\end{align*}
With  $\lambda_{\min}(\Sigma^{-1})$ denoting the smallest eigenvalue of $\Sigma^{-1}$, we have
\begin{align*}
\frac{x^t\Sigma_{h+1}^{-1}x}{x^tx}>\frac{1}{\rho}\geq \lambda_{\min}(\Sigma^{-1}).
\end{align*}
Given that $\Sigma_{h+1}$, and thus  $\Sigma_{h+1}^{-1}$, are positive definite matrices, $\lambda_{\min}(\Sigma^{-1})>0$ so that  we can choose $\rho$ such that $\rho\Sigma_{h+1}^{-1}-E_{h+1}$ is  positive semidefinite.
Since ordinal patterns are not affected by scaling, we may for this reason  assume that  $(\Sigma_{h+1}^{-1}-E_{h+1})$ is positive semidefinite.
\end{remark}

\begin{proof}
Expanding both $f$ and $f\circ A$ in Hermite polynomials with respect to $\N(0,E_{h+1})$ yields
\begin{align}
f(\U)=&\sum_{k=m_1}^{\infty}\sum_{l_1+\ldots+l_{h+1}=k}\frac{J^1_{l_1,\ldots,l_{h+1}}}{l_1!\cdots l_{h+1}!}H_{l_1,\ldots,l_{h+1}}(\U) \label{eq:proof_1}\\
(f\circ A)(\U)=&\sum_{k=m_2}^{\infty}\sum_{l_1+\ldots+l_{h+1}=k}\frac{J^2_{l_1,\ldots,l_{h+1}}}{l_1!\cdots l_{h+1}!}H_{l_1,\ldots,l_{h+1}}(\U),\notag
\intertext{where $m_1=m(f,E_{h+1})$ and $m_2=m(f,\Sigma_{h+1})$. Using Lemma \ref{lemma:L2-convergence} we can replace $\U$ by $A\cdot\U$ in \eqref{eq:proof_1} such that}
(f\circ A)(\U)=&\sum_{k=m_1}^{\infty}\sum_{l_1+\ldots+l_{h+1}=k}\frac{J^1_{l_1,\ldots,l_{h+1}}}{l_1!\cdots l_{h+1}!}(H_{l_1,\ldots,l_{h+1}}\circ A)(\U).\label{eq:proof_2}
\intertext{Each polynomial $H_{l_1,\ldots,l_{h+1}}\circ A$ can be represented by a linear combination of multivariate Hermite polynomials of degree less than or equal to $l_1+\ldots+l_{h+1}$. Therefore, we can rewrite \eqref{eq:proof_2} to}
(f\circ A)(\U)=&\sum_{k=m_3}^{\infty}\sum_{l_1+\ldots+l_{h+1}=k}\frac{J^3_{l_1,\ldots,l_{h+1}}}{l_1!\cdots l_{h+1}!}H_{l_1,\ldots,l_{h+1}}(\U),\notag
\end{align}
with $m_3\leq m_1$. By uniqueness of the Hermite decomposition we have $m_2=m_3$, which completes the proof.
\end{proof}
Proof of Lemma \ref{lemm: estimatorHR1}:
\begin{proof}
Since ordinal patterns are not affected by scaling, we may assume that $(\Sigma_{h+1}^{-1}-E_{h+1})$ is positive semidefinite. According to Lemma \ref{lemma_1} it suffices to show $E(Y_k1_{\{\tilde{\Pi}(Y_1,\ldots,Y_{h})=\pi\}})\neq 0$ for some independent standard normal random variables $Y_1,\ldots,Y_{h}$ and some $1\leq k\leq h$. For simplicity, we regard the pattern $\pi=(h,...,0)$ which corresponds to the event $\{Y_i\geq 0,~i=1,...,h\}$. Hence, we arrive at
\begin{align*}
\Ex(Y_11_{\{Y_1\geq 0,\ldots,Y_h\geq 0\}})
=&\int\limits_{0}^{\infty}\int\limits_{0}^{\infty}\ldots\int\limits_{0}^{\infty}
      y_1\varphi(y_1)\varphi(y_2)\cdots \varphi(y_{h})dy_1dy_2\cdots dy_{h}\\
=&\left(\frac{1}{2}\right)^{h-1}\varphi(0)\neq0.
\end{align*}
It follows by the same reasoning that
none of the expected values that correspond to the other ordinal patterns equals zero.
\end{proof}
Proof of Lemma \ref{lemm: estimatorHR2}:
\begin{proof}
For $\Sigma_h=AA^t$ we have $m(f,\Sigma)=m(f\circ A,E_{h})$; see \cite{beran:feng:ghosh:kulik:2013}, Lemma 3.7. According to Lemma \ref{lemma_1}, we have 
\begin{align*}
m(f\circ A,E_{h})\leq m(f,E_{h}).
\end{align*}
As a result, it is sufficient to show that $ m(f,E_{h})\leq 2$, such that 
we may conclude $m(f,\Sigma_h)=2$.

To this end, let $\Y_1=(Y_1, \ldots, Y_h )$ be a standard Gaussian random vector (i.e. with autocovariance matrix $E_h$).
Following the arguments above, we only need to consider the second order Hermite coefficients of $\hat{q}_n(\pi)$ for a fixed pattern $\pi\in\bar{\pi}$:
\begin{align*}
&b_{jk}^{\pi}=\Ex\left(Y_k Y_j 1_{\left\{\tilde{\Pi}\left(Y_1,...,Y_h\right)=\pi\right\}}\right),~ 1\leq k<j\leq h,  \text{      and}\\
&b_{jj}^{\pi}=\Ex\left(\left(Y_j^2-1\right)1_{\left\{\tilde{\Pi}\left(Y_1,...,Y_h\right)=\pi\right\}}\right),~j=1,...,h.
\end{align*}
For simplicity we regard $\pi=(h,h-1,...,0)$. Note that for this pattern it suffices to show $b_{jk}^{\pi}\neq 0$ to prove $ m(f,E_{h})\leq 2$, since in this case $c_{i,j}=c_{i,j}^{\pi}$ for $i,j=1,...,h.$
For $j\neq k$
\begin{align*}
b_{jk}^{\pi}&=\int_0^{\infty}...\int_0^{\infty} y_j y_k \varphi (y_1)...\varphi(y_{h})\mathrm{d}y_1...\mathrm{d}y_{h}\\
&=\frac{1}{2^{h-2}}\int_0^{\infty} y_j \varphi(y_j) \mathrm{d}y_j \int_0^{\infty} y_k \varphi(y_k) \mathrm{d}y_k \\
&=\frac{\varphi^2(0)}{2^{h-2}}.
\end{align*}
\end{proof}

Proof of the determination of the Hermite rank of the considered estimator in Remark \ref{rem:specialcase}:
\begin{proof}
Let $(X_k)_{k \geq 0}$ be a stationary, long-range dependent,  standard normal Gaussian process and let $h\in\NN$. 
By Lemma \ref{lemma_1} it is enough to show that $\Ex(Y_k1_{\{\Pi(Y_0,\ldots,Y_{h})=\pi\}})\neq 0$ for some independent standard normal random variables $Y_0,\ldots,Y_{h}$ and some $0\leq k\leq h$. Without loss of generality let $\pi=id$ and set $k=0$. This yields
\begin{align*}
\Ex(Y_01_{\{Y_0\leq\ldots\leq Y_{h}\}})
=&\int\limits_{-\infty}^{\infty}\int\limits_{-\infty}^{x_{h+1}}\int\limits_{-\infty}^{x_h}\ldots\int\limits_{-\infty}^{x_2}
      x_1\varphi(x_1)\varphi(x_2)\cdots \varphi(x_{h+1})dx_1dx_2\cdots dx_{h+1}\\
=&-\int\limits_{-\infty}^{\infty}\int\limits_{-\infty}^{x_{h+1}}\ldots\int\limits_{-\infty}^{x_3}
      \varphi(x_2)^2\varphi(x_3)\cdots \varphi(x_{h+1})dx_2\cdots dx_{h+1}\\
\neq&0      
\end{align*}
since we integrate a strictly positive function. Hence, for any $\pi\in S_h$ the Hermite rank of the function $f:\mathbb{R}^{h+1} \longrightarrow \mathbb{R}$, defined by
\begin{equation*}
f(x_0, x_1, \ldots, x_h):=1_{\{\Pi(x_0,\ldots,x_{h})=\pi\}}-\Px(\Pi(x_0,\ldots,x_{h})=\pi),
\end{equation*}
is equal to 1.
\end{proof}

We have finished our preparations and are now able to turn to the limit theorems for the partial sums in \eqref{eq:p-sum2} for function $f$ with Hermite rank $1$ and $2$. In the following, we will assume without loss of generality that
\[
  \Ex (f(X_1,\ldots,X_{p}))=0.
\]
We want to apply the results of \cite{arcones:1994} which hold for partial sums of functions of $\R^p$-valued random vectors $\Y_i$ that have a multivariate standard normal distribution; see also \cite{major:2019} for an alternative approach. Thus, we need to transform the vector $\X_i:=(X_i,\ldots,X_{i+p-1})$ accordingly. Let $\Sigma_{p}$ denote the covariance matrix of the vector $(X_1,\ldots,X_{p})$. Observe that $\Sigma_p$ is a Toeplitz matrix whose entries are determined by the autocovariance function 
$r(i)=\Ex (X_1 X_{1+i})$ of the process $(X_i)_{i\geq 0}$, i.e.,
\[
  \Sigma_p=\left( r(i-j)  \right)_{1\leq i,j\leq p}.
\] The Cholesky decomposition yields
\[
  \Sigma_p=AA^t,
\]
where $A$ is an upper triangular matrix.  Thus, there exists a standard normally distributed random vector $Y_i$ such that 
\[
  \X_i=A\, \Y_i.
\]

We can rewrite the partial sum in \eqref{eq:p-sum} in terms of the random vectors $\Y_i$ as follows:
\begin{align}\label{eq:f=g}
  \sum_{i=1}^n f(X_i,\ldots,X_{i+p-1}) =\sum_{i=1}^n f(A\Y_i) =\sum_{i=1}^n g(\Y_i),
\end{align}
where  $g:\R^{p}\rightarrow \R$ is defined by
\[
  g(y)=f(A y).
\]

In order to characterize the asymptotic distribution of the considered partial sum process, we apply Theorem~6 of \cite{arcones:1994}. Employing the special structure of $\Y_i$  we obtain explicit representations of the limit distributions for the cases Hermite rank equal to $1$ and  $2$.

\subsection{Limit theorems for functions with Hermite rank 1}
First we consider the asymptotic behaviour of function $f$ with Hermite rank $1$. Note that this Theorem implies the statement of Theorem \ref{th:lt-he}.

\begin{theorem}
\label{th:clt-f}
Let $(X_j)_{j\geq0}$ be a  stationary, long-range dependent standard Gaussian
process with autocovariance function
$r(k)=L(k)k^{-D}$ and let  $f:\R^{p}\rightarrow \R$ be a function with Hermite rank $1$ satisfying $\Ex(f(X_1,\ldots,X_{p}))^2<\infty$. Then,
\[
  \frac{1}{n^{1-D/2}L^{1/2}(n)} \sum_{i=1}^n \left( f(X_i,\ldots,X_{i+p-1}) 
  -\Ex f(X_1,\ldots,X_{p})\right) \longrightarrow \N \left(0,c_D \left(\sum_{j=1}^{p} \alpha_j\right)^2\right),
\]
where $c_D=\frac{2}{(1-D)(2-D)}$ and where $\alpha=\left(\alpha_1,\ldots,\alpha_{p}\right)^t= \Sigma_p^{-1} c$ with $c=\left(c_1,\ldots,c_p\right)^t=\Ex (f(X_1,\ldots,X_p)\X_1)$.
\end{theorem}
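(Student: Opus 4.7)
The plan is to reduce this $p$-dimensional limit to the classical one-dimensional non-central limit theorem for partial sums of a Gaussian long-range dependent sequence, by extracting the first-order Hermite projection of $f$ and showing that all higher-order contributions are $L^2$-negligible after the normalization $n^{1-D/2}L^{1/2}(n)$.

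First, I would diagonalize the within-block dependence via the Cholesky decomposition $\Sigma_p = AA^t$, writing $\X_i = A\Y_i$ with $\Y_i \sim \N(0, E_p)$ and setting $g := f \circ A$, so that $f(\X_i) = g(\Y_i)$. By Lemma~3.7 of \cite{beran:feng:ghosh:kulik:2013}, $m(g, E_p) = m(f, \Sigma_p) = 1$, so expanding $g$ in multivariate Hermite polynomials with respect to $\N(0, E_p)$ and separating the first-order term yields
\begin{equation*}
g(y) = \sum_{j=1}^p \beta_j\, y^{(j)} + R(y), \qquad \beta_j := \Ex\bigl(g(\Y_1) Y_1^{(j)}\bigr),
\end{equation*}
where $R$ has Hermite rank at least $2$ with respect to $E_p$.

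Next I would rewrite the linear part back in terms of the original $X_i$. The identities $c = \Ex(f(\X_1)\X_1) = A\boldsymbol{\beta}$ and $\alpha = \Sigma_p^{-1}c = A^{-t}\boldsymbol{\beta}$ give $\boldsymbol{\beta}^t A^{-1} = \alpha^t$, hence $\boldsymbol{\beta}^t \Y_i = \alpha^t \X_i = \sum_{j=1}^p \alpha_j X_{i+j-1}$. Since the boundary differences $\sum_{i=1}^n X_{i+j-1} - \sum_{i=1}^n X_i$ are $O_P(1)$ uniformly in $n$, summing over $i$ gives
\begin{equation*}
\sum_{i=1}^n \bigl(g(\Y_i) - R(\Y_i)\bigr) = \Bigl(\sum_{j=1}^p \alpha_j\Bigr) \sum_{i=1}^n X_i + O_P(1),
\end{equation*}
and the classical non-central limit theorem for Gaussian long-range dependent sums yields $n^{-(1-D/2)}L^{-1/2}(n)\sum_{i=1}^n X_i \xrightarrow{D} \N(0, c_D)$. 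Combined via Slutsky, this produces the claimed Gaussian limit with variance $c_D(\sum_j \alpha_j)^2$.

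Finally, the contribution of $R$ must be shown to vanish on the scale $n^{1-D/2}L^{1/2}(n)$. Since $\Ex(\Y_0 \Y_k^t) = A^{-1}\Ex(\X_0 \X_k^t)A^{-t}$ has entries of order $r(k) = L(k)k^{-D}$ and $R$ has Hermite rank $\geq 2$, a layer-by-layer variance estimate shows that each rank-$\tau$ component contributes at most $O(n^{2-\tau D}L^{\tau}(n))$ when $\tau D < 1$ and $O(n)$ otherwise; in both cases this is $o(n^{2-D}L(n))$ for $0 < D < 1$. The main obstacle is precisely this uniform control: summing the layer-by-layer bounds over all $\tau \geq 2$ requires care, and a cleaner route is to invoke Arcones' Theorem~6 in \cite{arcones:1994} directly for the multivariate Gaussian sequence $(\Y_i)$ and the function $g$, and then identify the limiting variance via the reduction above.
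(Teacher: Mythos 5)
Your proposal is correct and follows essentially the same route as the paper: Cholesky transformation to an i.i.d.-marginal Gaussian vector sequence, identification of the first-order Hermite coefficients as $\alpha=\Sigma_p^{-1}c$, reduction to the linear term via Arcones' Theorem~6, and exact normality of $\sum_{i=1}^n X_i$ with variance $\sim c_D n^{2-D}L(n)$. Your honest remark that the layer-by-layer bound on the remainder needs uniform control over all Hermite ranks, and that one should simply invoke the reduction theorem of Arcones, is precisely what the paper does.
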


\begin{proof}
Given that the function $f$ has Hermite rank $m(f,\Sigma_p)=1$, the limit behaviour corresponds to the asymptotic behaviour of the first order term in the Hermite expansion of $f$. The Hermite rank $m(f,\Sigma_p)$ of $f$ with respect to $\X_i$ is the same as the Hermite rank $m(g,E_p)=m(f\circ A,E_p)$ of $g$ with respect to $\Y_i$; see \cite{beran:feng:ghosh:kulik:2013}, Lemma 3.7.
Since $f(X_i,\ldots,X_{i+p-1})=g(\Y_i)$ this first order term is given by
\begin{equation}
  \sum_{j=1}^{p} \left(\Ex \left(g(\Y_i) Y_{i+j-1}  \right)  \right)\,  Y_{i+j-1},
\label{eq:h-exp}
\end{equation}
with $1\leq j\leq p$, since $\Y_i=(Y_i,...,Y_{i+p-1})$. It follows by stationarity and by definition of the process $(\Y_i)_{i\geq 0}$ that the coefficient in the Hermite expansion \eqref{eq:h-exp} corresponds to
\[
b_j:= \Ex \left(  g(\Y_i) Y_{i+j-1} \right)= \Ex ( g(\Y_1) Y_{j}  )
= \Ex\left( f(X_1,\ldots,X_{1+h}) Y_{j}  \right).
\]
We can thus express the vector of coefficients $b:=\left(b_1,\ldots,b_{p}\right)^t$ as follows:
\begin{eqnarray*}
b&=& \Ex \left( f(X_1,\ldots,X_{p}) \Y_1  \right)\\
&=& \Ex \left( f(X_1,\ldots,X_{p}) A^{-1} \X_1 \right) =A^{-1}\cdot \Ex \left(f(X_1,\ldots,X_{p})  \X_1 \right)
=A^{-1}\cdot c,
\end{eqnarray*}
where $c=(c_1,\ldots,c_{p})^t$ is the vector of inner products of the random variables $X_1,\ldots, X_{p}$ with $f(X_1,\ldots,X_{p})$, i.e.,
\[
  c_k=\Ex \left(f(X_1,\ldots,X_{p}) X_k\right), \; 1\leq k\leq p.
\]
According to the results of \cite{arcones:1994}, we know that the partial sums $\sum_{i=1}^n g(Y_i)$ are dominated by the corresponding partial sums of the first order term in the Hermite expansion, i.e., that
\begin{equation}
  \sum_{i=1}^n g(\Y_i)= \sum_{i=1}^n \left( \sum_{j=1}^{p} \left( \Ex\left(g(Y_i) Y_{i+j-1}  \right)\right)  \, Y_{i+j-1}
  \right) +o_{\Px}(n^{1-D/2}L^{1/2}(n)),
\label{eq:sum-approx}
\end{equation}
where for a sequence of random variables $(X_n)_{n\in\mathbb{N}}$ we write $X_n=o_{\Px}(n)$ if $\frac{X_n}{n}\xrightarrow{\Px} 0$.

With the notations introduced above, we obtain 
\[
 \sum_{j=1}^{p} \Ex\left(g(\Y_i) Y_{i+j-1}  \right)  \cdot Y_{i+j-1} =b^t \Y_i =b^t A^{-1} \X_i =\sum_{j=1}^{p} \alpha_j X_{i+j-1},
\]
where the vector $\alpha=(\alpha_1,\ldots,\alpha_{p})^t$ is given by
\[
  \alpha:=(b^tA^{-1})^t =\left(A^{-1}\right)^t b=\left(A^{-1}\right)^t A^{-1} c = \Sigma_p^{-1} c. 
\]
Thus, we obtain 
\begin{align*}
  \sum_{i=1}^n g(\Y_i)&= \sum_{i=1}^n \left( \sum_{j=1}^{p} \alpha_j X_{i+j-1}
  \right) +o_{\Px}(n^{1-D/2}L^{1/2}(n))\\
  &=\sum_{j=1}^{p} \left\{ \alpha_j \left( \sum_{i=1}^n X_{i+j-1} \right)\right\}+o_{\Px}(n^{1-D/2}L^{1/2}(n))\\
&= \left( \sum_{j=1}^{p} \alpha_j\right) \sum_{i=1}^n X_i+o_{\Px}(n^{1-D/2}L^{1/2}(n)).
\end{align*}
The distribution of the partial sum $\sum_{i=1}^n X_i$ on the right-hand side can be  calculated exactly, as this is a partial sum of normal random variables.
\end{proof}

In the following, we study partial sums of functions of increments of a stationary long-range dependent Gaussian process of the following type 
\[
 \sum_{i=1}^n \tilde{f}(X_{i+1}-X_i,\ldots,X_{i+p-1}-X_{i+p-2}).
\]
This is a special case of partial sums of the type $\sum_{i=1}^n f(X_i,\ldots,X_{i+p-1})$, where 
\[
  f(x_1,\ldots,x_p)=\tilde{f}(x_2-x_1,\ldots,x_p-x_{p-1}).
\]
Functions of this kind appear, e.g. when studying ordinal patterns, cf. Section 3, considerations in \eqref{incrementmapping}. Therefore the following lemma combined with Theorem \ref{th:clt-f} yield the justification for the asymptotic result derived in Remark \ref{rem:specialcase}.

\begin{lemma}\label{lemma:degenerate-limit}
If $f$ can be written as a function of the increments, we have
\[
  \sum_{i=1}^p \alpha_i=0.
\]
\end{lemma}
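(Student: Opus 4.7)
The plan is to rewrite $\sum_{i=1}^p \alpha_i$ as an expectation of $f$ against a single linear combination of $X_1,\ldots,X_p$, and then deduce the vanishing of that expectation from a one-line independence argument inside the Gaussian world.

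First I would introduce the vector of ones $\mathbf{1} = (1,\ldots,1)^t \in \R^p$ and set $u := \Sigma_p^{-1}\mathbf{1}$. Using $\alpha = \Sigma_p^{-1} c$ with $c_k = \Ex(f(X_1,\ldots,X_p) X_k)$, one then obtains
\[
\sum_{i=1}^p \alpha_i \;=\; \mathbf{1}^t \alpha \;=\; \mathbf{1}^t \Sigma_p^{-1} c \;=\; u^t c \;=\; \Ex\!\left(f(X_1,\ldots,X_p)\, L\right),
\]
where $L := \sum_{j=1}^p u_j X_j$. So the task reduces to showing $\Ex(fL)=0$.

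Next I would observe that the defining identity $\Sigma_p u = \mathbf{1}$ says exactly that $\Cov(L, X_k) = 1$ for every $k\in\{1,\ldots,p\}$, and therefore
\[
\Cov\!\left(L,\, X_k - X_{k-1}\right) \;=\; 0 \qquad \text{for every } k\in\{2,\ldots,p\}.
\]
Since $(L, X_2-X_1, \ldots, X_p-X_{p-1})$ is jointly Gaussian, uncorrelatedness upgrades to genuine independence: $L$ is independent of the entire increment vector, and hence, since $f$ is by hypothesis a function of the increments alone, $L$ is also independent of $f(X_1,\ldots,X_p)$.

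Finally, $\Ex(L) = \sum_j u_j \Ex(X_j) = 0$ because the process is centred, so the independence just established yields $\Ex(fL) = \Ex(f)\,\Ex(L) = 0$, and the lemma follows. The only real insight is recognising that $u=\Sigma_p^{-1}\mathbf{1}$ is the correct vector to focus on; after that, the identity $\Sigma_p u = \mathbf{1}$ delivers orthogonality to every increment in one line, Gaussianity promotes this to independence, and no regularity assumption on $f$ (such as differentiability for a Stein-type argument) is needed.
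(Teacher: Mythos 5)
Your proof is correct and is essentially the paper's argument: both identify $\sum_i\alpha_i$ with $\Ex\bigl(f\cdot\mathbf{1}^t\Sigma_p^{-1}X\bigr)$ and show that the linear functional $\mathbf{1}^t\Sigma_p^{-1}X$ is a centred Gaussian variable independent of the increment vector, so the expectation factorizes and vanishes. The only difference is presentational — you verify the independence by computing $\Cov(L,X_k)=(\Sigma_p u)_k=1$ directly, whereas the paper passes through the symmetric square root $\Sigma_p^{1/2}$ and checks the matrix identity $U\Sigma_p^{1/2}\bigl(\mathbb{I}_p^t\Sigma_p^{-1/2}\bigr)^t=U\mathbb{I}_p=0$; your route is marginally more economical.
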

\begin{proof}
We use a well-known fact about Gaussian random variables: Let $Y=(Y_1,\ldots,Y_p)^t$ be a vector of 
independent standard normally distributed random variables $Y_i$, $1\leq i \leq p$, and let $C_1\in \R^{k\times p}$ and $C_2\in \R^{l\times p}$ be two matrices. Then, the random vectors $C_1Y$ and $C_2Y$ are independent, if and only if each of the rows of $C_1$ is orthogonal to each of the rows of $C_2$, i.e., when
$C_1 C_2^t=0$. 

We then use the representation of $\alpha$ that we derived in the course of the proof of Theorem~\ref{th:clt-f}, namely 
\begin{eqnarray*}
  \alpha&=& \Sigma_p^{-1} \Ex\left\{ \tilde{f}((X_2-X_1,\ldots,X_p-X_{p-1})^t ) (X_1,\ldots,X_p)^t \right\} \\
  &=&  \Ex\left\{ \tilde{f}((X_2-X_1,\ldots,X_p-X_{p-1})^t) \Sigma_p^{-1} (X_1,\ldots,X_p)^t \right\} \\
  &=&  \Ex\left\{ \tilde{f}(U X) \Sigma_p^{-1}X\right\},
\end{eqnarray*}
where $X=(X_1,\ldots,X_p)^t$, and  where $U$ is the $(p-1)\times p$ matrix defined by
\[
  U=\left( 
 \begin{array}{rrrrrrrrr}
 -1 & 1 & 0 &0 &\ldots & 0 &0 &0& 0 \\
  0 & -1 &1 &0  & \ldots & 0 & 0 &0&0\\
  \vdots &  \vdots &  \vdots &  \vdots &  \vdots &  \vdots &\vdots &\vdots &\vdots  \\
  0 & 0 &0& 0& \ldots &0 & -1 &1 &0 \\
  0 & 0 &0& 0& \ldots &0 & 0 & -1 &1 \\
 \end{array}  
   \right).
\]
Let $\Sigma_p^{1/2}$ be a positive definite symmetric matrix such that $\Sigma_p^{1/2} \Sigma_p^{1/2}= \Sigma_p$ and let $\Sigma_p^{-\frac{1}{2}}$ be its inverse. Then, $Y:=\Sigma_p^{-1/2}X$ has a $p$-variate standard normal distribution. With this notation, we can rewrite the above expression for $\alpha$ as follows:
\[
  \alpha=\Ex\left\{ \tilde{f}(U X) \Sigma_p^{-1}X\right\} = \Ex\left\{ \tilde{f}(U \Sigma_p^{1/2} Y) \Sigma_p^{-1/2}Y  \right\}.
\]
If ${\mathbb I}_p^t = (1,1,\ldots,1,1)$, 
\[
  \sum_{i=1}^p\alpha_i = {\mathbb I}_p^t \alpha = \Ex\left\{ \tilde{f}(U \Sigma_p^{1/2} Y) {\mathbb I}_p^t \Sigma_p^{-1/2}Y  \right\}.
\]
Now we can apply the initial remark to the vectors $U \Sigma_p^{1/2} Y $ and ${\mathbb I}_p^t \Sigma_p^{-1/2} Y$. We have
\[
  (U\Sigma_p^{1/2})  ({\mathbb I}_p^t \Sigma_p^{-1/2})^t = U\Sigma_p^{1/2} \Sigma_p^{-1/2} {\mathbb I}_p  = U {\mathbb I}_p =0,
\]
and thus  the vectors $U \Sigma_p^{1/2} Y $ and ${\mathbb I}_p^t \Sigma_p^{-1/2} Y$ are independent. Hence,
\[
 \sum_{i=1}^p\alpha_i = {\mathbb I}_p^t \alpha = \Ex\left\{ \tilde{f}(U \Sigma_p^{1/2} Y) {\mathbb I}_p^t \Sigma_p^{-1/2}Y  \right\} 
 =\Ex\left\{ \tilde{f}(U \Sigma_p^{1/2} Y)\right\} \Ex\left\{  {\mathbb I}_p^t \Sigma_p^{-1/2}Y  \right\} =0,
\]
since $\Ex(Y)=0$.
\end{proof}

\begin{remark}
Lemma \ref{lemma:degenerate-limit} implies that the limit in Theorem 2.3 is trivial if the function $f$ can be considered as a function of the increment process of a stationary, long-range dependent Gaussian
process $(X_j)_{j\geq0}$.
An explanation for this phenomenon results from the observation that the  increments of long-range dependent time series do not display characteristic features of long-range dependence.
To see this, 
let $g$ denote the spectral density of the time series $(X_j)_{j\geq 0}$, i.e. $g$ is a non-negative function satisfying
\begin{align*}
r(k):=\Cov(X_0, X_k)=\int_{-\pi}^{\pi}e^{ik\lambda}g(\lambda)d\lambda.
\end{align*}
By assumption, we have 
\begin{align*}
r(k)=k^{-D}L(k),
\end{align*}
where $L$ is a  function that is slowly varying at infinity.
If, additionally,  $L$ is quasi-monotone,  
it follows that
\begin{align*}
g(\lambda)= |\lambda|^{D-1}L_g(\lambda)
\end{align*}
for some function $L_g$ that is  slowly varying  at zero; see \cite{pipiras:taqqu:2017}, p. 19.
For the increment process $(Z_j)_{j\geq 1}$ defined by $Z_j:=X_j-X_{j-1}$, it then holds that
\begin{align*}
\Cov(Z_1, Z_{k+1})=\int_{-\pi}^{\pi}e^{ik\lambda}2(1-\cos(\lambda))f(\lambda)d\lambda.
\end{align*}
For this reason, $\tilde{g}(\lambda):=2(1-\cos(\lambda))g(\lambda)$ corresponds to the spectral density of the process $(Z_j)_{j\geq 1}$. 
Note that
\begin{align*}
\tilde{g}(\lambda)=|\lambda|^{D+1}L_{\tilde{g}}(\lambda) 
\end{align*}
with 
\begin{align*}
L_{\tilde{g}}(\lambda):=L_{g}(\lambda)\frac{2(1-\cos(\lambda))}{\lambda^2}
\end{align*}
 slowly varying at zero.
It follows that
\begin{align*}
\sum\limits_{k=-\infty}^{\infty}r(k)=0,
\end{align*}
i.e., the increment process is antipersistent  and, in particular, short-range dependent; see \cite{pipiras:taqqu:2017},  p. 31.
\end{remark}
This finding coincides with results on limit theorems for discretely observed processes based on fractional Brownian motion, where the application of linear difference filters leads to a smaller exponent in the autocovariance function, cf. \cite{coeurjolly:2001}, \cite{istas:1997}. In our setting this would mean that considering differences of the stationary, long-range dependent process would lead to a short-range dependent process and hence to a Gaussian central limit theorem with a different normalizing constant, namely $n^{-1/2}$.
\subsection{Limit theorems for functions with Hermite rank 2}
We continue to study the asymptotic behaviour of the partial sums in \eqref{eq:p-sum} for a function $f$ with Hermite rank $2$ and therefore obtain the statement of Theorem \ref{th:lt-ie} along the way.
\begin{theorem}\label{th:nclt-f}
Let $(X_j)_{j\geq1}$ be a  stationary, long-range dependent standard Gaussian
process with  autocovariance function
$r(k)=L(k)k^{-D},~~k\geq 1$, and let  $f:\R^{p}\rightarrow \R$ be a function with Hermite rank $2$ satisfying $\Ex(f(X_1,\ldots,X_{p}))^2<\infty$. Then,
 if $D\in (0, \frac{1}{2})$,
\[
n^{D-1}(2!C_2)^{-\frac{1}{2}}L^{-1}(n) \sum_{i=1}^n \left( f(X_i,\ldots,X_{i+p-1}) 
  -\Ex f(X_1,\ldots,X_{p})\right) \overset{\mathcal{D}}{\longrightarrow}Z_{2, (1-D/2)}(1)
\sum\limits_{k=1}^{p}\sum\limits_{l=1}^{p}\alpha_{l, k}
\]
with $(\alpha_{l,k})_{1\leq l, k\leq p}:=\Sigma_{p}^{-1} C \Sigma_{p}^{-1}$, $C=(c_{l,k})_{1\leq l, k\leq p}=\Ex \left(\X_1(f(\X_1)-\Ex f(\X_1))\X_1^t\right)$ \\
and $C_2=\left((1-2D)(2-D)\right)^{-1}$.
\end{theorem}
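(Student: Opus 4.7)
The plan is to mimic the structure of the proof of Theorem \ref{th:clt-f} but to extract the Hermite rank $2$ contribution instead of rank $1$. First I would perform the Cholesky reduction $\Sigma_p=AA^t$ already used in that proof: setting $\mathbf X_i = A\mathbf Y_i$ and $g:=f\circ A$, we have $f(\mathbf X_i)=g(\mathbf Y_i)$, and by Lemma~3.7 of \cite{beran:feng:ghosh:kulik:2013} the Hermite rank of $g$ with respect to $\mathcal N(0,E_p)$ equals $m(f,\Sigma_p)=2$. Hence the Hermite expansion of $g$ begins at the second order.

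The second step is to invoke Arcones' Theorem~6: for a function of Hermite rank $m=2$ of a long-range dependent multivariate Gaussian sequence, the partial sums, after normalization by $n^{1-D}L(n)$, are asymptotically equivalent to the partial sums of the second-order term $T_2(\mathbf Y_i)$ of the Hermite expansion of $g$. Collecting the degree-two contributions one obtains
\[
T_2(\mathbf Y) \;=\; \tfrac12\bigl(\mathbf Y^t B\,\mathbf Y - \operatorname{tr} B\bigr),
\qquad B_{jk}:=\Ex\bigl[g(\mathbf Y)\,(Y_j Y_k-\delta_{jk})\bigr].
\]
Translating back via $\mathbf Y=A^{-1}\mathbf X$ gives the quadratic form
\[
T_2(\mathbf Y_i) \;=\; \tfrac12\bigl(\mathbf X_i^t\alpha\,\mathbf X_i - \operatorname{tr}(\alpha\Sigma_p)\bigr)
\;=\; \tfrac12\sum_{l,k=1}^{p}\alpha_{l,k}\bigl(X_{i+l-1}X_{i+k-1}-r(k-l)\bigr)
\]
with $\alpha=A^{-t}BA^{-1}$. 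A direct computation of $C_{l,k}=\Ex[X_lf(\mathbf X_1)X_k]$ using $X_l=\sum_m A_{lm}Y_m$ together with $\Ex[Y_mY_n g(\mathbf Y)]=B_{mn}$ (which uses $\Ex g=0$) yields $C=ABA^t$, and hence $\alpha=A^{-t}A^{-1}CA^{-t}A^{-1}=\Sigma_p^{-1}C\Sigma_p^{-1}$, matching the definition in the theorem.

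The final and main step is to identify the limit of $\sum_{i=1}^n T_2(\mathbf Y_i)$ as a specific multiple of the Rosenblatt variable. Each inner sum $\sum_{i=1}^n(X_{i+l-1}X_{i+k-1}-r(k-l))$ is an element of the second Wiener chaos generated by $(X_j)_{j\ge 1}$. Using the spectral representation $X_j=\int e^{ij\lambda}\sqrt{g_s(\lambda)}\,dB(\lambda)$ the sum has a double Wiener–Itô kernel which, after rescaling and for $0<D<1/2$, converges in $L^2$ to the Rosenblatt kernel indexed by $H=1-D/2$; the extra phase factor $e^{i(k-l)\lambda}\to 1$ at the low frequencies that drive the long-range dependence, so all pairs $(l,k)$ contribute the \emph{same} random variable $Z_{2,1-D/2}(1)$ in the limit (not merely the same distribution). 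This is the non-central limit theorem of Taqqu and Dobrushin--Major in its second-chaos form, and it is the point where one must exercise most care, because it is only this joint convergence that allows the constants $\alpha_{l,k}$ to be pulled through the limit and summed. Once the joint convergence is in hand, the normalization constant $(2!C_2)^{1/2}L(n)n^{1-D}$ is calibrated so that $\sum_i H_2(X_i)$ produces the Rosenblatt factor with unit coefficient, and assembling the pieces gives $Z_{2,1-D/2}(1)\sum_{l,k}\alpha_{l,k}$, completing the proof.
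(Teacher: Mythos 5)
Your proposal follows essentially the same route as the paper: Cholesky reduction $\X_i=A\Y_i$, $g=f\circ A$; Arcones' reduction to the second-order term of the Hermite expansion; rewriting that term as a quadratic form in $\X_i$ with coefficient matrix $\Sigma_p^{-1}C\Sigma_p^{-1}$; and then the key point that the normalized sums $\sum_{i}(X_{i+l-1}X_{i+k-1}-r(k-l))$ converge \emph{jointly} to one and the same Rosenblatt variable for all pairs $(l,k)$, so that the coefficients can be summed. The paper obtains this last fact by citing the joint limit theorem for sample autocovariances (Beran et al., Section 4.4.1.3), whereas you argue it from the spectral double Wiener--It\^{o} representation with the phase factor $e^{i(k-l)\lambda}\to 1$ at low frequencies; both are legitimate, and your emphasis that it is joint convergence to the \emph{same} random variable (not merely equality in law) that licenses pulling the $\alpha_{l,k}$ through the limit is exactly the right point of care.

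There is, however, one concrete gap: the constant. Your $T_2(\Y)=\tfrac12\bigl(\Y^tB\Y-\operatorname{tr}B\bigr)$ is the correct projection onto the second chaos (consistent with the expansion \eqref{eq:Hermite_expansion}, whose weights $1/(l_1!\cdots l_p!)$ give $\tfrac12 B_{jj}$ on the diagonal and count each off-diagonal pair once). Carrying that $\tfrac12$ through the final step yields the limit $\tfrac12\,Z_{2,1-D/2}(1)\sum_{l,k}\alpha_{l,k}$, yet your last sentence asserts $Z_{2,1-D/2}(1)\sum_{l,k}\alpha_{l,k}$; the factor $\tfrac12$ silently disappears and nothing in the normalization $(2!C_2)^{-1/2}$ absorbs it. The paper lands on $\sum_{l,k}\alpha_{l,k}$ because its displayed ``second-order term'' drops the factorial weights and sums over ordered pairs $j\neq k$, i.e.\ it works with $\Y^tB\Y-\operatorname{tr}B=2T_2$; one of the two bookkeepings must be wrong, and you should resolve which rather than asserting the target constant. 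A decisive sanity check is $p=1$, $f=H_2$: there $C=\Ex[X_1^2(X_1^2-1)]=2$, so $\alpha_{1,1}=2$, while $\sum_i H_2(X_i)$ normalized by $n^{1-D}(2!C_2)^{1/2}L(n)$ converges to $Z_{2,1-D/2}(1)$ with coefficient $1=\tfrac12\alpha_{1,1}$, which supports your $T_2$ and the extra factor $\tfrac12$ in the limit.
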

\noindent
\begin{remark}
The extension of the theorem above for  $D>\frac{1}{2}$ is given in \cite{arcones:1994}, Theorem 4.
\end{remark}
\begin{proof}
Recall that 
$ \X_i=A\, \Y_i$
for an upper triangular matrix $A$ with   $AA^t=\Sigma_p$ and $\Sigma_p=\left( r(i-j)  \right)_{1\leq i,j\leq p}$.
and a multivariate standard normally distributed vector $\Y_i$.
Since the Hermite rank of $g$, defined by  $g(y):=f(Ay)$, equals $2$, the partial sums $\sum_{i=1}^n g(\Y_i)$ are dominated by the corresponding partial sums of the second  order term in the Hermite expansion, i.e., that
\begin{equation*}
  \sum_{i=1}^n g(\Y_i) = \sum_{l_1+l_2+\cdots+l_{p}=2} \left(\Ex \left(g(\Y_i) H_{l_1, l_2,  \ldots, l_{p}}(\Y_i)  \right)  \right)\,  H_{l_1, l_2, \ldots, l_{p}}(\Y_i)+o_{\Px}(n^{1-D}L(n));
\end{equation*}
see  Theorem~6 in \cite{arcones:1994}.
Note that
\begin{align*}
&\sum_{l_1+l_2+\cdots+l_{p}=2} \left(\Ex \left(g(\Y_i) H_{l_1, l_2,  \ldots, l_{p}}(\Y_i)  \right) \right)\,  H_{l_1, l_2, \ldots, l_{p}}(\Y_i)\\
=&\sum\limits_{j=1}^{p}\left(\Ex \left(g(\Y_i) (Y_{i+j-1}^2-1) \right) \right)\,  (Y_{i+j-1}^2-1)
+  \sum\limits_{1\leq j, k\leq p, j\neq k}\left(\Ex \left(g(\Y_i) Y_{i+j-1}Y_{i+k-1} \right)  \right)\, Y_{i+j-1}Y_{i+k-1}\\
=&\sum\limits_{j=1}^{p}\left(\Ex \left(g(\Y_i) Y_{i+j-1}^2 \right)  \right)\,  (Y_{i+j-1}^2-1)
+ \sum\limits_{1\leq j, k\leq p, j\neq k}\left(\Ex \left(g(\Y_i) Y_{i+j-1}Y_{i+k-1} \right)  \right)\, Y_{i+j-1}Y_{i+k-1}\\
=&\sum\limits_{1\leq j, k\leq p}\Ex \left(g(\Y_i) Y_{i+j-1}Y_{i+k-1} \right)  \,  \, Y_{i+j-1}Y_{i+k-1}-\sum\limits_{j=1}^{p}\Ex \left(g(\Y_i) Y_{i+j-1}^2 \right).
\end{align*}
Since the left-hand side of the above equality is centered to mean zero, 
\begin{align*}
\Ex\left(\sum\limits_{1\leq j, k\leq p}\Ex \left(g(\Y_i) Y_{i+j-1}Y_{i+k-1} \right) \,  \, Y_{i+j-1}Y_{i+k-1}\right)=\sum\limits_{j=1}^{p}\Ex \left(g(\Y_i) Y_{i+j-1}^2 \right).
\end{align*}

With $B=(b_{j,k})_{1\leq j, k\leq p}$, where $b_{j, k}=\Ex \left(g(\Y_i) Y_{i+j-1}Y_{i+k-1} \right) =\Ex \left(g(\Y_1) Y_{j}Y_{k} \right) $, it follows that
\begin{align*}
\sum\limits_{1\leq j, k\leq p}\Ex \left(g(\Y_i) Y_{i+j-1}Y_{i+k-1} \right)   Y_{i+j-1}Y_{i+k-1}=\Y_i^tB\Y_i=\X_i^t\left(A^{-1}\right)^tBA^{-1}\X_i,
\end{align*}
where $Y_{i+j-1}$ denotes the $j$-th entry of the vector $\Y_i$, $1\leq j\leq p$, since $\Y_i=(Y_i,\ldots,Y_{i+p-1})$.

Note that $B=\Ex \left(\Y_ig(\Y_i)\Y_i^t\right)=\Ex \left(A^{-1}\X_ig(\Y_i)(A^{-1}\X_i)^t\right)=A^{-1}\Ex \left(\X_i f(\X_i)\X_i^t\right)\left(A^{-1}\right)^t$.
As a result,
\begin{align*}
\X_i^t\left(A^{-1}\right)^tBA^{-1}\X_i&=\X_i^t\left(AA^t\right)^{-1}\Ex \left(\X_i f(\X_i)\X_i^t\right)\left(AA^t\right)^{-1}\X_i\\
&=\X_i^t\Sigma_{p}^{-1}\Ex \left(\X_i f(\X_i)\X_i^t\right)\Sigma_{p}^{-1}\X_i.
\end{align*}

With $\mathcal{A}:=(\alpha_{jk})_{1\leq j,k\leq p}:=\Sigma_{p}^{-1}\Ex \left(\X_i f(\X_i)\X_i^t\right)\Sigma_{p}^{-1}$ it follows that
\begin{align*}
\X_i^t\left(A^{-1}\right)^tBA^{-1}\X_i=\sum\limits_{1\leq j, k\leq p}X_{i+j-1}X_{i+k-1}\alpha_{jk}.
\end{align*}
All in all, we arrive at
\begin{align*}
\sum\limits_{i=1}^n \sum\limits_{1\leq j, k\leq p}\Ex \left(g(\Y_i) Y_{i+j-1}Y_{i+k-1} \right)   Y_{i+j-1}Y_{i+k-1}&=
\sum\limits_{i=1}^n\sum\limits_{1\leq j, k\leq p}X_{i+j-1}X_{i+k-1}\alpha_{jk} \\
&=\sum\limits_{1\leq j, k\leq p}\sum\limits_{i=1}^n X_{i+j-1}X_{i+k-1}\alpha_{jk}.
\end{align*}

Note that
\begin{align*}
\sum\limits_{1\leq j, k\leq p}\sum\limits_{i=1}^n X_{i+j-1}X_{i+k-1}\alpha_{jk}
&=\sum\limits_{1\leq j, k\leq p}\sum\limits_{i=j}^{n+j-1} X_{i}X_{i+k-j}\alpha_{jk}\\
&=\sum\limits_{k=2}^{p}\sum\limits_{l=1}^{k-1}\alpha_{k-l, k}\sum\limits_{i=k-l}^{n+k-l-1} X_{i}X_{i+l}
+\sum\limits_{k=1}^{p}\sum\limits_{l=k-p}^0 \alpha_{k-l, k}\sum\limits_{i=k-l}^{n+k-l-1} X_{i}X_{i+l}.
\end{align*}
Define the sample covariance at lag $l$ by
\begin{align*}
\hat{r}_n(l):=\frac{1}{n}\sum\limits_{i=0}^{n-l}X_iX_{i+l}.
\end{align*}
Considering both summands separately, we arrive at
\begin{align*}
 \sum\limits_{l=1}^{k-1}\alpha_{k-l, k}\sum\limits_{i=k-l}^{n+k-l-1} X_{i}X_{i+l}
= \sum\limits_{l=1}^{k-1}\alpha_{k-l, k}n\left(\frac{1}{n}\sum\limits_{i=n-l+1}^{n+k-l-1} X_{i}X_{i+l}+\hat{r}_n(l)-\frac{1}{n}\sum\limits_{i=0}^{k-l-1} X_{i}X_{i+l}\right)
\end{align*}
and
\begin{align*}
\sum\limits_{l=k-p}^{0}\alpha_{k-l, k}\sum\limits_{i=k-l}^{n+k-l-1} X_{i}X_{i+l}
&=\sum\limits_{l=k-p}^{0}\alpha_{k-l, k}\sum\limits_{i=k}^{n+k-1} X_{i-l}X_{i}\\
&=\sum\limits_{l=0}^{p-k}\alpha_{k+l, k}\sum\limits_{i=k}^{n+k-1} X_{i+l}X_{i}\\
&=\sum\limits_{l=0}^{p-k}\alpha_{k+l, k}n\left(\frac{1}{n}\sum\limits_{i=n-l+1}^{n+k-1} X_{i+l}X_{i}+\hat{r}_n(l)-\frac{1}{n}\sum\limits_{i=0}^{k-1} X_{i+l}X_{i}\right).
\end{align*}

All in all, it follows that
\begin{align*}
&n^{D-1}(2!)^{-\frac{1}{2}}L^{-1}(n)\sum\limits_{i=1}^n \left(\sum\limits_{1\leq j, k\leq p}\Ex \left(g(\Y_i) Y_{i+j-1}Y_{i+k-1} \right)  \,  \, Y_{i+j-1}Y_{i+k-1}-\sum\limits_{j=1}^{p}\Ex \left(g(\Y_i) Y_{i+j-1}^2 \right)\right)\\
=&n^{D-1}(2!)^{-\frac{1}{2}}L^{-1}(n)\sum\limits_{1\leq j, k\leq p}\alpha_{jk}\sum\limits_{i=1}^n \left(X_{i+j-1}X_{i+k-1}-\Ex\left(X_{i+j-1}X_{i+k-1}\right)\right)\\
=&n^{D}(2!)^{-\frac{1}{2}}L^{-1}(n)
\sum\limits_{k=2}^{p}\sum\limits_{l=1}^{k-1}\alpha_{k-l, k}\left(\hat{r}_n(l)-r(l)\right)\\
&+n^{D}(2!)^{-\frac{1}{2}}L^{-1}(n)
\sum\limits_{k=1}^{p}\sum\limits_{l=0}^{p-k}\alpha_{k+l, k}\left(\hat{r}_n(l)-r(l)\right)+o_{\Px}(1).
\end{align*}
If $D\in (0, \frac{1}{2})$, it follows by Section 4.4.1.3 in \cite{beran:feng:ghosh:kulik:2013} that
\begin{align*}
n^{D}(2!C_2)^{-\frac{1}{2}}L^{-1}(n)\left(\hat{r}_n(0)-r(0), \ldots, \hat{r}_n(p)-r(p)\right)\overset{\mathcal{D}}{\longrightarrow}(Z_{2, H}(1), \ldots, Z_{2, H}(1)),
\end{align*}
where $Z_{2, H}(\cdot)$ is a Rosenblatt process with parameter $H=1-\frac{D}{2}$ and $C_2=\left((1-2D)(2-D)\right)^{-1}$.

Therefore, the considered expression converges in distribution to
\begin{align*}
Z_{2, H}(1)\left(
\sum\limits_{k=2}^{p}\sum\limits_{l=1}^{k-1}\alpha_{k-l, k}+
\sum\limits_{k=1}^{p}\sum\limits_{l=0}^{p-k}\alpha_{k+l, k}\right)=Z_{2, H}(1)
\sum\limits_{k=1}^{p}\sum\limits_{l=1}^{p}\alpha_{l, k}.
\end{align*}

\end{proof}
Hence, we are able to characterize the limit distribution of the partial sums in \eqref{eq:p-sum2} for functions $f$ with Hermite rank $1$ and $2$.
\section{Simulation study}
\noindent
We simulate $N=10000$ paths of fractional Gaussian noise (by the command ``simFGN0" from the RPackage ``longmemo") with sample size $n=1000000$ for different values of $H$ to compare the distribution of the estimators $\hat{q}_{n}(\pi)$, $\hat{p}_n(\pi)$ and $\hat{H}_n$ with the theoretical results derived above. We standardized the estimators following the normalization constants given in Theorem \ref{th:lt-he} and Theorem \ref{th:lt-ie}. The results depending on the long-range dependence parameter $H$ are displayed in Figure 3 and in Figure 4.\newpage
\begin{figure}[h]
\begin{center}
\includegraphics[width=0.8\textwidth]{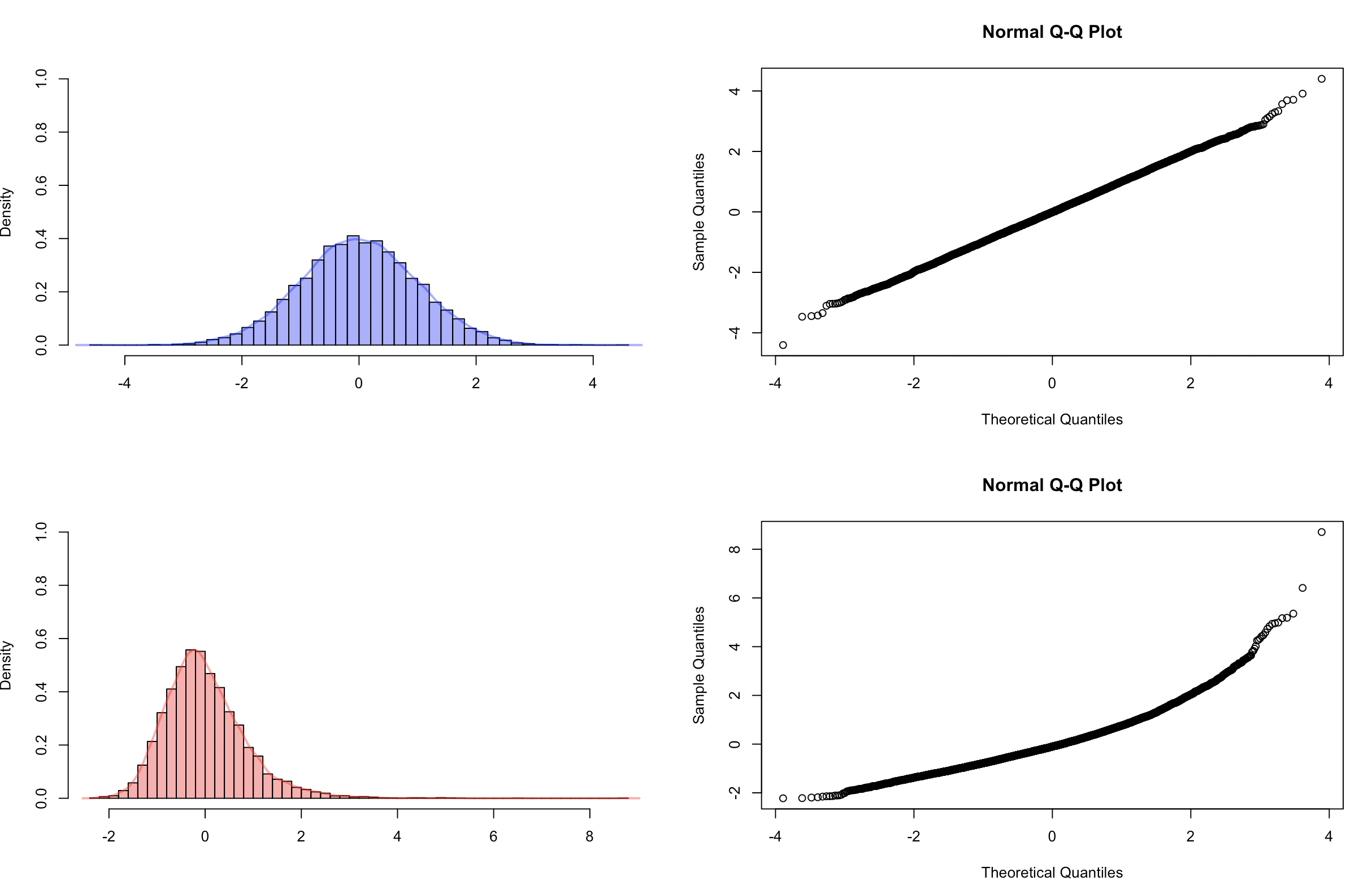}
\caption{Histogram, kernel density estimation and qqplot of the estimators $\hat{q}_n(\pi)$ (blue) and $\hat{p}_n(\pi)$ (red) for $n=1000000$ and $\pi=(2,1,0)$ in the case $H=0.8$ $(D=0.4)$.}
\end{center}
\end{figure}~\newline
\begin{figure}[h]
\begin{center}
\includegraphics[width=0.8\textwidth]{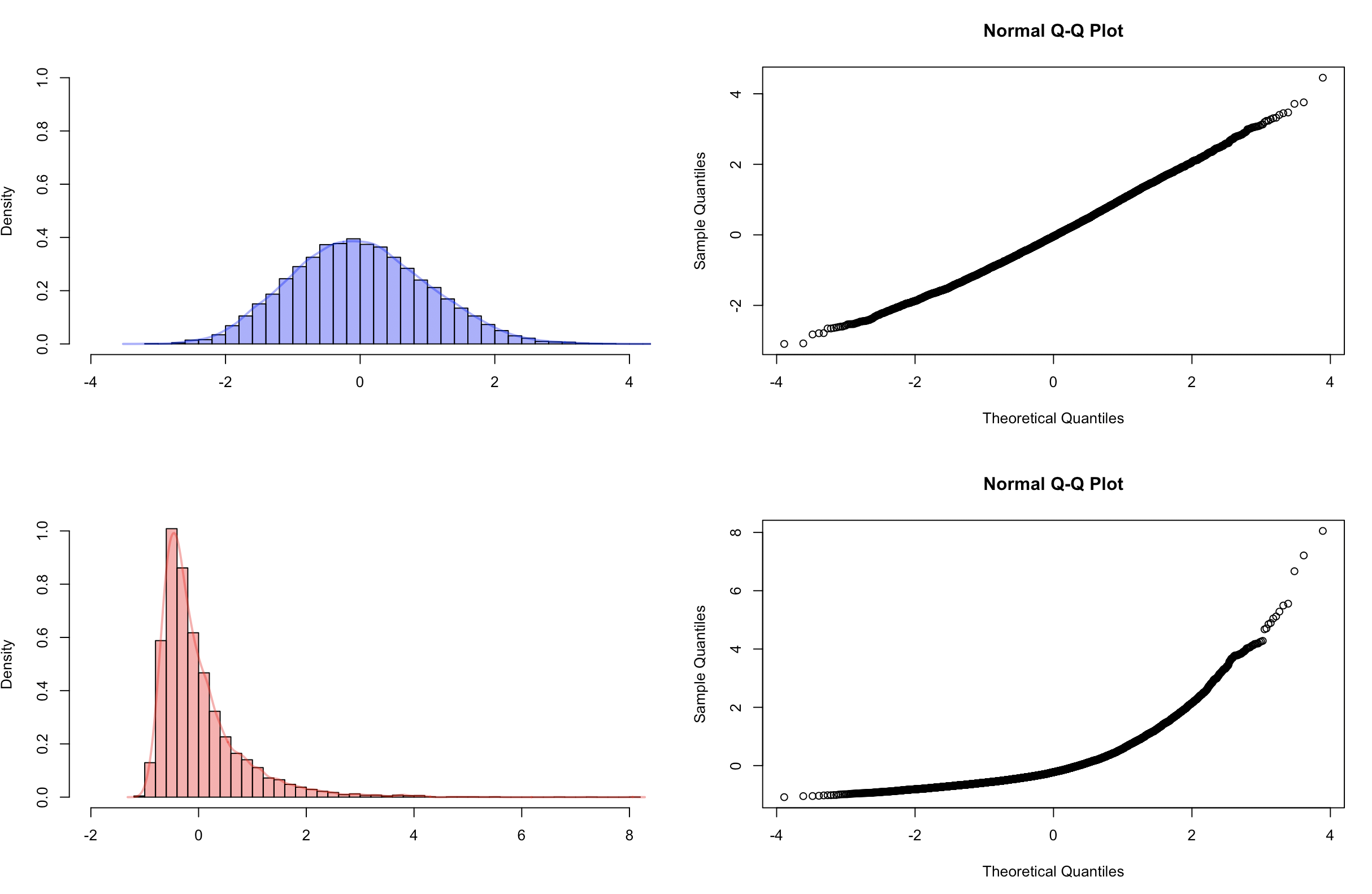}
\caption{Histogram, kernel density estimation and qqplot of the estimators $\hat{q}_n(\pi)$ (blue) and $\hat{p}_n(\pi)$ (red) for $n=1000000$ and $\pi=(2,1,0)$ in the case $H=0.9$ $(D=0.2)$.}
\end{center}
\end{figure}\newpage
In Figure 5,  the histograms and kernel density estimations of the estimator of the Hurst parameter are given, standardized by the normalizing constants we derived in Theorem \ref{th:lt-hurst}.
\begin{figure}[h]\label{fig: figure 5}
\begin{center}
\includegraphics[width=1\textwidth]{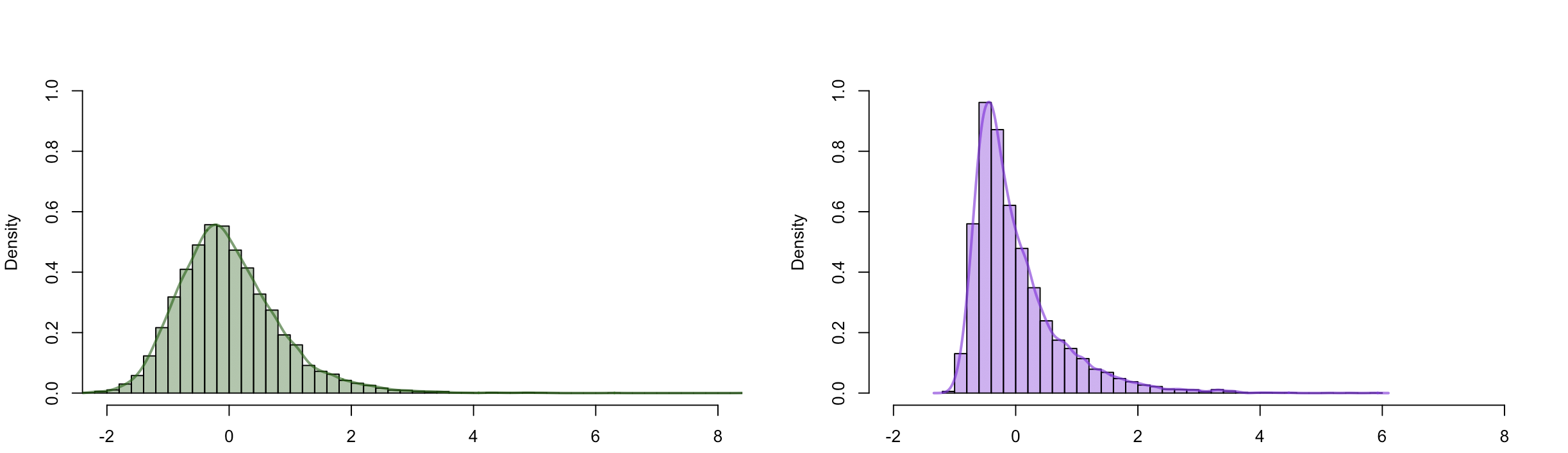}
\caption{Histogram and density of the Hurst parameter estimator for  $H=0.8$ (green) and $H=0.9$ (violet).}
\end{center}
\end{figure}\newline
Acknowledgments: We would like to thank two anonymous referees for their helpful comments.
\bibliography{bibliography_ordinal_patterns}~\newpage
\section{Appendix}
Calculation of the Hermite coefficients of $\hat{q}_n(\pi)$ for $h=2$ for the pattern $\pi=(2,1,0)$; cf. Example \ref{ex: forcholesky}.\par
Since we look at $h=2$, the covariance matrix of $\X_1=(X_1,X_2)^t$ is given by
\begin{align*}
\Sigma_2=\left( 
 \begin{array}{rr}
 1 & r(1)  \\
  r(1) & 1
 \end{array}  
   \right).
\end{align*}
The Cholesky decomposition  $\Sigma=AA^t$ has the following form:
\begin{align*}
A=\left( 
 \begin{array}{rr}
 1 &0  \\
  r(1) & \sqrt{1-(r(1))^2}
 \end{array}  
   \right)
\end{align*}
Note that $\X_1=A\Y_1$, where $\Y_1=(Y_1,Y_2)^t$ has a bivariate standard normal distribution.
Following Theorem \ref{th:lt-he}, we need to calculate $\alpha=\left(A^{-1}\right)^t b$, where $b=\mathbb{E}\left(\Y_1 1_{\left\{\tilde{\Pi}(X_1,X_2)=\pi\right\}}\right)$. Since
\begin{align*}
\left(A^{-1}\right)^t=\left( 
 \begin{array}{rr}
 1 & -\frac{r(1)}{\sqrt{1-(r(1))^2}}  \\
  0 & \frac{1}{\sqrt{1-(r(1))^2}},
 \end{array}  
   \right)
\end{align*}
we need to determine $b$ to calculate the variance in the limit distribution.

We consider $\pi=(2,1,0)$. From the Cholesky decomposition it follows that $X_1=Y_1$ and $X_2=r(1)Y_1+\sqrt{1-(r(1))^2}Y_2$ and therefore $c_1=\mathbb{E}\left(X_1 1_{\left\{\tilde{\Pi}(X_1,X_2)=\pi\right\}}\right)=b_1$ and $c_2=r(1)b_1+\sqrt{1-(r(1))^2}b_2$. For this choice of $\pi$ we also know by \eqref{eq:spacereversion} and \eqref{eq:timereversion} that $c_1=c_2$ and hence we arrive at
\begin{align*}
b_1=\frac{\sqrt{1-(r(1))^2}}{1-r(1)}b_2.
\end{align*}
Therefore, it is sufficient to only determine $b_2$. For this, we rewrite 
\begin{align*}
\{\tilde{\Pi}(X_1,X_2)=(2,1,0)\}&=\{X_1\geq 0,X_2\geq 0\}=\{Y_1\geq 0,r(1)Y_1+\sqrt{1-(r(1))^2}Y_2\geq 0\}\\
&=\{Y_1\geq 0,Y_2\geq -\frac{r(1)}{\sqrt{1-(r(1))^2}}Y_1\}.
\end{align*}
Hence, we need to determine
\begin{align*}
b_2&=\mathbb{E}\left(Y_2 1_{\left\{\tilde{\Pi}(X_1,X_2)=\pi\right\}}\right)\\
&=\int_0^{\infty}\int_{ -\frac{r(1)}{\sqrt{1-(r(1))^2}}Y_1}^{\infty} y_2 \varphi(y_2)\varphi(y_1)\mathrm{d}y_2\mathrm{d}y_1\\
&=\int_0^{\infty}\varphi \left( \frac{r(1)}{\sqrt{1-(r(1))^2}}y_1\right)\varphi(y_1)\mathrm{d}y_1\\
&=\frac{1}{2\pi}\int_0^{\infty}\exp\left(-\frac{\left(1+\frac{(r(1))^2}{1-(r(1))^2}\right)y_1^2}{2}\right)\mathrm{d}y_1\\
&=\frac{1}{2\pi}\int_0^{\infty}\exp\left(-\frac{\left(\frac{1}{1-(r(1))^2}\right)y_1^2}{2}\right)\mathrm{d}y_1\\
&=\frac{1}{2\sqrt{2\pi}}\sqrt{1-(r(1))^2}.
\end{align*}
Finally, we obtain
\begin{align*}
\sum_{j=1}^2 \alpha_j&=b_1+\frac{1-r(1)}{\sqrt{1-(r(1))^2}}b_2\\
&=\frac{\sqrt{1-(r(1))^2}}{1-r(1)}b_2+\frac{1-r(1)}{\sqrt{1-(r(1))^2}}b_2\\
&=\left(\sqrt{\frac{1+r(1)}{1-r(1)}}+\sqrt{\frac{1-r(1)}{1+r(1)}}\right)b_2\\
&=\frac{1}{2\sqrt{2\pi}}\frac{2}{\sqrt{1-(r(1))^2}}\sqrt{1-(r(1))^2}\\
&=\frac{1}{\sqrt{2\pi}}.
\end{align*}
As a result, we confirm the result from Example \ref{ex: forcholesky} for the pattern $\pi=(2,1,0)$. For $\pi=(2,0,1)$, the analytical calculations  work analogously.
\end{document}